\documentclass[reqno,12pt]{amsart}

\usepackage[margin=1in]{geometry}
\usepackage{amsmath,amssymb,amsthm}
\usepackage[norelsize,ruled,vlined]{algorithm2e}
\usepackage{graphicx}
\usepackage{epstopdf}
\usepackage{url}

\theoremstyle{plain}

\newtheorem{theorem}{Theorem}
\newtheorem{lemma}[theorem]{Lemma}
\newtheorem{definition}[theorem]{Definition}


\begin{document}

\title{Phase retrieval from power spectra of masked signals}

\author{Afonso S.\ Bandeira}
\address{Program in Applied and Computational Mathematics, Princeton University, Princeton, NJ 08544}
\email{ajsb@math.princeton.edu}

\author{Yutong Chen}
\address{Program in Applied and Computational Mathematics, Princeton University, Princeton, NJ 08544}
\email{yutong@math.princeton.edu}

\author{Dustin G. Mixon}
\address{Department of Mathematics and Statistics, Air Force Institute of Technology, Wright-Patterson AFB, OH 45433}
\email{dustin.mixon@afit.edu}

\begin{abstract}
In diffraction imaging, one is tasked with reconstructing a signal from its power spectrum.
To resolve the ambiguity in this inverse problem, one might invoke prior knowledge about the signal, but phase retrieval algorithms in this vein have found limited success.
One alternative is to create redundancy in the measurement process by illuminating the signal multiple times, distorting the signal each time with a different mask. 
Despite several recent advances in phase retrieval, the community has yet to construct an ensemble of masks which uniquely determines all signals and admits an efficient reconstruction algorithm. 
In this paper, we leverage the recently proposed polarization method to construct such an ensemble.
We also present numerical simulations to illustrate the stability of the polarization method in this setting.
In comparison to a state-of-the-art phase retrieval algorithm known as PhaseLift, we find that polarization is much faster with comparable stability.
\end{abstract}

\maketitle

\section{Introduction}

In many applications, one wishes to reconstruct a signal from the magnitudes of its Fourier coefficients.
This problem is known as \textit{phase retrieval}, and it has been instrumental to many important scientific advances, including the Nobel Prize--winning work that leveraged X-ray diffraction to establish the double helix structure of DNA~\cite{WatsonC:53}.
Of course, given only the magnitudes of a signal's Fourier coefficients, one does not have enough information to recover the signal---while the Fourier transform is injective, the point-wise absolute value is not.
As such, one is inclined to use a priori knowledge of the signal, and hope it is then uniquely determined by the Fourier magnitudes. 
For example, to deduce the structure of DNA, Watson and Crick~\cite{WatsonC:53} applied certain chemical assumptions, along with a knowledge of van der Waals interactions between atoms.

In order to image more exotic molecules, such assumptions are difficult to apply, and so there has been quite a bit of work attempting to exploit more general assumptions (e.g., positivity or support constraints).
To account for this sort of prior information, the most popular phase retrieval algorithms are modifications of Gerchberg and Saxton's original approach~\cite{GerchbergS:72}, which alternates between the time and frequency domains, iteratively correcting the current guess by imposing time-domain assumptions or scaling Fourier coefficients to match the measured data.
Marchesini~\cite{Marchesini:07} surveys and compares the various modifications, but they all have a tendency to stall in local minima.

To address this issue, Cand\`{e}s, Eldar, Strohmer and Voroninski~\cite{CandesESV:11} proposed an alternative methodology whereby nonuniqueness is overcome not by prior information, but by additional illuminations.
For each illumination, a different mask (or grating) is used to distort the appearance of the object in question; in mathematical parlance, each mask acts as a multiplication operator on the desired signal before the Fourier transform.
Furthermore, Cand\`{e}s et al.\ constructed three masks such that the corresponding illuminations uniquely determine almost every signal up to a global phase factor; a fourth illumination is necessary to uniquely determine all signals~\cite{BandeiraCMN:13}.
However, the phase retrieval algorithms they propose are not guaranteed to reconstruct the desired signal from these three illuminations, even if the signal is uniquely determined by the illuminations. 

Let $F^*\colon\ell(\mathbb{Z}_M)\rightarrow\ell(\mathbb{Z}_M)$ denote the discrete Fourier transform (DFT) defined by
\begin{equation*}
(F^*x)(m)
:=\frac{1}{M}\sum_{m'\in\mathbb{Z}_M}x(m')e^{-2\pi imm'/M}
\qquad
\forall m\in\mathbb{Z}^M.
\end{equation*}
In this paper, we follow the lead of~\cite{CandesESV:11} to find masks $\{D_k\}_{k=0}^{K-1}$ (i.e., multiplication operators on $\ell(\mathbb{Z}_M)=\mathbb{C}^M$) such that every signal $x\in\ell(\mathbb{Z}_M)$ can be efficiently reconstructed, up to a global phase factor, from measurements of the form
\begin{equation*}
\{|F^*D_k^*x|^2\}_{k=0}^{K-1}=\{|(D_kF)^*x|^2\}_{k=0}^{K-1}=\{|\langle x,D_kf_m\rangle|^2\}_{k=0,}^{K-1}\ _{m=0}^{M-1},
\end{equation*}
where $f_m$ denotes the complex sinusoid $\{\frac{1}{M}e^{2\pi imm'/M}\}_{m'\in\mathbb{Z}_M}$.
Specifically, we will design the masks in such a way that allows for a new reconstruction technique to work, namely the polarization-based technique introduced in~\cite{AlexeevBFM:12}.
As established in~\cite{AlexeevBFM:12}, every signal $x$ can be stably reconstructed from $|\langle x,\varphi_i\rangle|^2$ if the measurement vectors $\varphi_i$ are constructed using a particular random process.
In this paper, we show how to fulfill the design criteria that measurement vectors be of the form $D_kf_m$ while still allowing for polarization-based reconstruction.
If anything, this demonstrates that polarization is flexible enough to provably accommodate certain measurement design requirements, which are bound to arise from various applications.

In the next section, we briefly review the main ideas behind the polarization-based technique, and we show how to apply them with Fourier masks.
In particular, we reduce the problem of building masks to a problem in additive combinatorics: 
Find a small subset of $\mathbb{Z}_M$ with small Fourier bias.
In Section~3, we use concentration-of-measure arguments to solve this problem, culminating in the construction of $\mathcal{O}(\log M)$ masks that uniquely determine every signal in $\mathbb{C}^M$ up to a global phase factor.
Next in Section~4, we compare the performance of polarization to a state-of-the-art phase retrieval algorithm known as PhaseLift~\cite{CandesESV:11,CandesSV:11}.
Here, we show that polarization provides a comparable amount of stability with Fourier masks, while being orders of magnitude faster.
We conclude in Section~5 with some remarks.

\section{How to polarize Fourier masks}

The main contribution of this paper is that the ideas in~\cite{AlexeevBFM:12} can be leveraged to uniquely measure signals with masked Fourier transforms.
In this section, we summarize these ideas and explain how we apply them.
In~\cite{AlexeevBFM:12}, the signal $x$ is measured with two ensembles of vectors $\Phi_V,\Phi_E\subseteq\mathbb{C}^M$.
Here, $\Phi_V$ has the property that every subcollection of size $M$ spans $\mathbb{C}^M$; such ensembles are called \textit{full spark frames}, of which there are several known constructions~\cite{AlexeevCM:12,PuschelK:05}.
To determine $\Phi_E$, first define a graph $G=(V,E)$ whose vertices $i\in V$ index the measurement vectors $\varphi_i\in \Phi_V$.
Then each edge $(i,j)\in E$ corresponds to three members of $\Phi_E$, namely $\{\varphi_i+\omega^r\varphi_j\}_{r=0}^2$, where $\omega=e^{2\pi i/3}$.
These edges are useful because they determine the relative phase between the inner products $\langle x,\varphi_i\rangle$ and $\langle x,\varphi_j\rangle$.
Specifically, a version of the polarization identity gives that
\begin{equation}
\label{eq.polarization trick}
\overline{\langle x,\varphi_i\rangle}\langle x,\varphi_j\rangle
=\frac{1}{3}\sum_{r=0}^2\omega^r|\langle x,\varphi_i\rangle+\omega^{-r}\langle x,\varphi_j\rangle|^2=\frac{1}{3}\sum_{r=0}^2\omega^r|\langle x,\varphi_i+\omega^r\varphi_j\rangle|^2,
\end{equation}
and so the relative phase is calculated by normalizing this quantity (provided it's nonzero).
On the other hand, if an inner product $\langle x,\varphi_i\rangle$ is zero, then for every vertex $j$ adjacent to $i$ in $G$, the members of $\Phi_E$ corresponding to $(i,j)$ are useless, since the relative phases they are intended to capture are not well defined.
Indeed, $x$ being orthogonal to $\varphi_i$ has the effect of deleting the vertex $i$ (and its edges) from the graph $G$.
However, since $\Phi_V$ is full spark, $x$ is orthogonal to at most $M-1$ vertex vectors, thereby limiting the damage to our graph.
Furthermore, for any remaining connected component $S\subseteq V$ of size $\geq M$, we can propagate the relative phases to determine $\{\langle x,\varphi_i\rangle\}_{i\in S}$ up to a global phase factor, and since $\{\varphi_i\}_{i\in S}$ necessarily span $\mathbb{C}^M$, we can determine $x$ from $\{\langle x,\varphi_i\rangle\}_{i\in S}$ by least-squares estimation.
Interestingly, there will always be a connected component of size $\geq M$ after the removal of any $M-1$ vertices, provided the graph $G$ is regular with a sufficiently large \textit{spectral gap}. 
To be clear, the spectral gap of a $d$-regular graph $G$ is determined by the eigenvalues $\lambda_1\geq\lambda_2\geq\cdots$ of its adjacency matrix:
\begin{equation*}
\operatorname{spg}(G)=\frac{1}{d}\Big(\lambda_1-\max_{i\neq 1}|\lambda_i|\Big),
\end{equation*}
and the existence of the requisite connected component is given by the following classical result in spectral graph theory (e.g., see Lemma~5.2 of~\cite{HarshaB:online}):

\begin{lemma}
\label{lemma.spectral gap vs connectivity}
Consider a $d$-regular graph $G$ of $n$ vertices.
For all $\varepsilon\leq\operatorname{spg}(G)/6$, removing any $\varepsilon dn$ edges from $G$ results in a connected component of size $\geq(1-2\varepsilon/\operatorname{spg}(G))n$.
\end{lemma}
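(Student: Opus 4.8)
The plan is to translate "removing a few edges" into a statement about the edge boundaries of vertex sets, and then feed those boundaries into a spectral isoperimetric inequality. Throughout write $s:=\operatorname{spg}(G)$ and $\delta:=2\varepsilon/s$, so the hypothesis $\varepsilon\le s/6$ reads $\delta\le 1/3$ and the target component has size $\ge(1-\delta)n$. The single spectral ingredient I would establish first is that for every $\emptyset\ne S\subsetneq V$ the number $e_G(S,V\setminus S)$ of edges of $G$ with exactly one endpoint in $S$ obeys
\begin{equation*}
e_G(S,V\setminus S)\ \ge\ s\,d\,\frac{|S|\,(n-|S|)}{n}.
\end{equation*}
This is the standard Rayleigh-quotient bound: with $L=dI-A$ the Laplacian and $\mathbf 1_S=\tfrac{|S|}{n}\mathbf 1+w$ the decomposition of the indicator with $w\perp\mathbf 1$, one has $e_G(S,V\setminus S)=\mathbf 1_S^{\top}L\,\mathbf 1_S=w^{\top}Lw\ge(d-\lambda_2)\|w\|^2$, together with $\|w\|^2=\tfrac{|S|(n-|S|)}{n}$ and $d-\lambda_2\ge d-\max_{i\neq 1}|\lambda_i|=s\,d$.

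Next, suppose a set $F$ with $|F|\le\varepsilon dn$ is removed, and let $C_1,\dots,C_m$ be the connected components of $G-F$ with $C_1$ a largest one. Every edge of $G$ joining $C_1$ to its complement lies in $F$, so $e_G(C_1,V\setminus C_1)\le|F|\le\varepsilon dn$, and the isoperimetric bound gives $\tfrac{|C_1|(n-|C_1|)}{n^2}\le\varepsilon/s=\delta/2$. A one-line convexity remark—the parabola $\alpha(1-\alpha)$ is $\ge\delta(1-\delta)$ on $[\delta,1-\delta]$, and $\delta(1-\delta)>\delta/2$ since $\delta<1/2$—shows this forces $|C_1|\le\delta n$ or $|C_1|\ge(1-\delta)n$. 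In the second case the lemma is proved, so I assume $|C_1|\le\delta n$, which, since $C_1$ is largest, means \emph{every} component has size at most $\delta n$.

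To finish I would sum the isoperimetric bound over all components. Each edge of $G$ between two distinct components is counted exactly twice in $\sum_i e_G(C_i,V\setminus C_i)$ and lies in $F$, so $2|F|\ge\sum_i e_G(C_i,V\setminus C_i)\ge\tfrac{s d}{n}\bigl(n^2-\sum_i|C_i|^2\bigr)$. Using $|C_i|\le\delta n$ and $\sum_i|C_i|=n$ gives $\sum_i|C_i|^2\le\delta n^2$, hence $2|F|\ge s d n(1-\delta)$. On the other hand $2|F|\le 2\varepsilon dn=s\delta\,dn$ by the definition of $\delta$, so $s\delta\ge s(1-\delta)$, i.e.\ $\delta\ge 1/2$, contradicting $\delta\le 1/3$. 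This contradiction yields a component of size $\ge(1-\delta)n$.

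I expect the spectral isoperimetric inequality to be the conceptual heart—it is the only place the spectral gap enters—but because it is entirely standard, the part that actually needs care is the bookkeeping in the last two paragraphs: one must first rule out a single medium-sized component via the dichotomy for $C_1$, since only once all components are known to be small does the double-counting sum become strong enough. The constant $6$ is chosen comfortably so that $\delta\le 1/3$, safely below the threshold $\delta\ge 1/2$ at which the argument would collapse. An alternative ending that avoids the double count is a discrete intermediate-value argument greedily merging components into a set of size in $(\delta n,(1-\delta)n)$, but that version is fussier at the boundary $\varepsilon=s/6$, so I would prefer the counting argument above.
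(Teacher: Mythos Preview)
Your argument is correct. The paper does not supply its own proof of this lemma; it quotes it as a classical spectral-graph-theory fact and points to Lemma~5.2 of the Harsha--Barth notes. Your route---the Rayleigh-quotient edge-isoperimetric bound $e_G(S,V\setminus S)\ge s\,d\,|S|(n-|S|)/n$, followed by the dichotomy for the largest component and then the double-counting sum over all components---is exactly the standard proof one finds in such references, so there is nothing substantively different to compare. Every step checks out: the Laplacian computation, the parabola dichotomy (valid because $\delta\le 1/3<1/2$), the bound $\sum_i|C_i|^2\le\delta n^2$ from $|C_i|\le\delta n$, and the final inequality $s\delta\ge s(1-\delta)$ yielding the contradiction $\delta\ge 1/2$.
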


In summary, in order to apply the polarization trick in the setting of masked DFTs, it suffices to (i) construct a full spark ensemble $\Phi_V$ with masked DFTs, and (ii) construct a graph $G$ with a sufficiently large spectral gap and whose corresponding edge vectors $\Phi_E$ can also be implemented with masked DFTs.
We quickly address (i) with the following result:

\begin{lemma}
\label{lemma.full spark phi_v}
For each $k\in\{0,\ldots,K-1\}$, pick some nonzero $\alpha_k\in\mathbb{C}$ and define $D_k:=\operatorname{diag}\{\alpha_k^m\}_{m=0}^{M-1}$.
Let $f_m$ denote the complex sinusoid $\{\frac{1}{M}e^{2\pi imm'/M}\}_{m'\in\mathbb{Z}_M}$
Then $\{D_kf_m\}_{k=0,}^{K-1}\ \!_{m=0}^{M-1}$ is full spark if and only if $\alpha_k\alpha_{k'}^{-1}$ is not an $m$th root of unity for any pair of distinct $k,k'\in\{0,\ldots,K-1\}$.  
\end{lemma}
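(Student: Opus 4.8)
The plan is to recognize that every $M\times M$ subsystem of $\{D_kf_m\}$ is, up to a harmless global scalar, a Vandermonde matrix, and then to read the full spark condition directly off the Vandermonde determinant.

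First I would pick any $M$ of the $KM$ vectors, say $D_{k_1}f_{m_1},\dots,D_{k_M}f_{m_M}$ with the index pairs $(k_j,m_j)$ pairwise distinct, and form the $M\times M$ matrix $A$ having these as its columns. Writing out the $m'$th entry of $D_{k_j}f_{m_j}$ gives $\frac{1}{M}\alpha_{k_j}^{m'}e^{2\pi i m_jm'/M}=\frac{1}{M}\beta_j^{m'}$ with $\beta_j:=\alpha_{k_j}e^{2\pi i m_j/M}$ and $m'=0,\dots,M-1$, so $A=\frac{1}{M}V$ where $V$ is the Vandermonde matrix on the nodes $\beta_1,\dots,\beta_M$. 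Hence $\det A=M^{-M}\prod_{1\le j<j'\le M}(\beta_{j'}-\beta_j)$, and the chosen $M$ vectors are linearly independent exactly when $\beta_1,\dots,\beta_M$ are pairwise distinct. Since $\{D_kf_m\}$ is full spark precisely when every such subsystem is invertible, everything reduces to deciding when two of these nodes can coincide.

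Next I would examine the equation $\beta_j=\beta_{j'}$ for $j\neq j'$: it says $\alpha_{k_j}\alpha_{k_{j'}}^{-1}=e^{2\pi i(m_{j'}-m_j)/M}$, i.e.\ that $\alpha_{k_j}\alpha_{k_{j'}}^{-1}$ is an $M$th root of unity. If $k_j=k_{j'}$ the right-hand side must equal $1$, which forces $m_j\equiv m_{j'}\pmod M$ and contradicts distinctness of the index pairs; so a node collision can occur only between two pairs with $k_j\neq k_{j'}$, and then exactly when $\alpha_{k_j}\alpha_{k_{j'}}^{-1}$ is an $M$th root of unity. The ``if'' direction now follows at once: if no ratio $\alpha_k\alpha_{k'}^{-1}$ with $k\neq k'$ is an $M$th root of unity, then the nodes attached to any $M$-subset are pairwise distinct, every $M\times M$ subsystem is invertible, and $\{D_kf_m\}$ is full spark.

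For the converse I would argue by contrapositive. Suppose $\alpha_k\alpha_{k'}^{-1}=e^{2\pi i t/M}$ for some distinct $k,k'$ and some integer $t$. Then $D_kf_0$ and $D_{k'}f_{t\bmod M}$ carry the same node $\beta=\alpha_k$; completing these two vectors to an $M$-subset of $\{D_kf_m\}$ --- possible since there are $KM\ge M$ distinct index pairs available --- produces an $M\times M$ subsystem with a repeated Vandermonde node and hence vanishing determinant, so $\{D_kf_m\}$ is not full spark. The two directions together prove the lemma. I do not expect a real obstacle here: the crux is simply spotting the Vandermonde structure concealed in $D_kf_m$, and the only point that needs a little care is the separate treatment of the $k_j=k_{j'}$ case above (together with, if one is scrupulous, the degenerate case $M=1$).
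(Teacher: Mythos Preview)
Your proof is correct and follows essentially the same approach as the paper: both identify the columns $D_kf_m$ as Vandermonde columns with nodes $\alpha_k e^{2\pi i m/M}$, and reduce full spark to pairwise distinctness of these nodes via the Vandermonde determinant. The paper's proof is a terse three-sentence version of exactly this argument; you have spelled out the converse direction and the $k_j=k_{j'}$ case more carefully, but there is no substantive difference in method.
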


\begin{proof}
The $M\times KM$ matrix whose columns are $D_kf_m$ has Vandermonde form, as does each of its $M\times M$ submatrices.
These submatrices are all invertible precisely when their determinants are nonzero, that is, when $\{\alpha_ke^{2\pi i m/M}\}_{k=0,}^{K-1}\ \!_{m=0}^{M-1}$ are distinct (by the Vandermonde determinant formula).
The lemma immediately follows.
\end{proof}

The condition above is satisfied with probability $1$ if the $\alpha_k$'s are drawn uniformly at random from the complex unit circle.
For a deterministic alternative, it suffices to take $\alpha_k=e^{2\pi i k/KM}$.
Now that we have established how to construct masks $D_k$ such that $\Phi_V=\{D_kf_m\}_{k=0,}^{K-1}\ \!_{m=0}^{M-1}$ is full spark, we turn to solving (ii).
Before describing our construction of $G$, we first motivate it by illustrating how polarized combinations of our vertex vectors can be expressed using masked DFTs:

\begin{lemma}
\label{lemma.modulation trick}
Take $\omega=e^{2\pi i/3}$, let $E=\operatorname{diag}\{e^{2\pi im/M}\}_{m=0}^{M-1}$ denote the modulation operator, and consider $\Phi_V=\{D_kf_m\}_{k=0,}^{K-1}\ \!_{m=0}^{M-1}$, as defined in Lemma~\ref{lemma.full spark phi_v}.
Then
\begin{equation*}
D_kf_m+\omega^rD_{k'}f_{m'}
=(D_k+\omega^rE^{m'-m}D_{k'})f_m
\end{equation*}
for all $k,k'\in\{0,\ldots,K-1\}$ and $m,m'\in\mathbb{Z}_M$.
\end{lemma}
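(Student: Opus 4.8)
The plan is to verify the identity coordinatewise, relying on two elementary facts: that the modulation operator $E$ acts on a complex sinusoid by shifting its frequency index, and that $E$, $D_k$, and $D_{k'}$ are all diagonal and hence pairwise commute.

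First I would record the shift property. Since $E=\operatorname{diag}\{e^{2\pi i\ell/M}\}_{\ell=0}^{M-1}$, applying $E^{j}$ to $f_m$ multiplies its $\ell$th coordinate $\frac{1}{M}e^{2\pi im\ell/M}$ by $e^{2\pi ij\ell/M}$, yielding $\frac{1}{M}e^{2\pi i(m+j)\ell/M}$; that is, $E^{j}f_m=f_{m+j}$ for every integer $j$, with the frequency index read modulo $M$ (which is legitimate because $E^{M}=I$ and $f_{m+M}=f_m$). Specializing to $j=m'-m$ gives $E^{m'-m}f_m=f_{m'}$.

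Next I would invoke commutativity: because $D_{k'}$ and $E$ are both diagonal, $E^{m'-m}D_{k'}=D_{k'}E^{m'-m}$, and therefore
\[
(D_k+\omega^rE^{m'-m}D_{k'})f_m
=D_kf_m+\omega^r D_{k'}E^{m'-m}f_m
=D_kf_m+\omega^r D_{k'}f_{m'},
\]
which is exactly the asserted equality. There is no genuine obstacle here; the only point requiring any care is the bookkeeping of the exponent $m'-m$ modulo $M$, and this is harmless precisely because $E$ has order dividing $M$, so $E^{m'-m}$ is well defined regardless of the representative chosen for $m'-m\in\mathbb{Z}_M$.
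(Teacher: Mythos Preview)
Your proof is correct and follows essentially the same route as the paper: both arguments verify coordinatewise that $D_{k'}f_{m'}=E^{m'-m}D_{k'}f_m$, after which the lemma is immediate. The only cosmetic difference is that the paper does this in a single line by expanding $(D_{k'}f_{m'})(\ell)$ directly, whereas you split it into the shift identity $E^{m'-m}f_m=f_{m'}$ plus the commutativity of the diagonal operators $D_{k'}$ and $E^{m'-m}$.
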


\begin{proof}
Consider the $\ell$th entry of $D_{k'}f_{m'}$:
\begin{equation*}
(D_{k'}f_{m'})(\ell)
=\alpha_{k'}^\ell\cdot \frac{1}{M}e^{2\pi im'\ell/M}
=e^{2\pi i(m'-m)\ell/M}\cdot\alpha_{k'}^\ell\cdot \frac{1}{M}e^{2\pi im\ell/M}
=(E^{m'-m}D_{k'}f_m)(\ell).
\end{equation*}
Then $D_kf_m+\omega^rD_{k'}f_{m'}=D_kf_m+\omega^rE^{m'-m}D_{k'}f_m=(D_k+\omega^rE^{m'-m}D_{k'})f_m$.
\end{proof}

When implementing the modulation trick of Lemma~\ref{lemma.modulation trick} using DFTs, we will have to fix $m'-m$ (so as to apply a fixed mask) and let $m$ vary (since we will mask an entire DFT).
As such, we intend to use auxiliary masks of the form $\{D_k+\omega^rE^aD_{k'}\}_{r=0}^2$, thereby drawing an edge between every $(k,m)$ and $(k',m')$ such that $m'-m=a\bmod M$.  
This informs our decision of how to construct the graph $G$, since it enables a masked-DFT implementation:

\begin{definition}
\label{definition.graph construction}
We construct $G$ as follows:
Pick $A\subseteq\mathbb{Z}_M$ such that $0\not\in A$ and $A=-A$.
Then $(k,m)$ and $(k',m')$ are adjacent precisely when $m'-m\in A$.
\end{definition}

Note that for each $a\in A$, the corresponding edges between $\{(k,m)\}_{m\in\mathbb{Z}_M}$ and $\{(k',m')\}_{m'\in\mathbb{Z}_M}$ are implemented with the masks $\{D_k+\omega^rE^aD_{k'}\}_{r=0}^2$.
As such, $A$ is the set of modulations used to combine the $K$ original masks $D_k$ into $3\binom{K+1}{2}|A|$ auxiliary masks.
Also note that $G$ has no loops since $0\not\in A$, $G$ is not directed since $A=-A$, and furthermore every vertex has degree $K|A|$, so $G$ is regular.
It remains to show that $G$ has a sufficiently large spectral gap.
To simplify this analysis, we first relate the spectral gap to a fundamental concept in additive combinatorics called the \textit{Fourier bias}.
As in~\cite{TaoV:06}, take $A\subseteq\mathbb{Z}_M$ and let $\mathbf{1}_A$ denote the characteristic function of $A$.
Then the Fourier bias of $A$ is defined as follows:
\begin{equation*}
\|A\|_u:=\max_{m\neq 0}|(F^*\mathbf{1}_A)(m)|.
\end{equation*}
Fourier bias is used in additive combinatorics to measure pseudorandomness; here, the main idea is that correlation with any complex sinusoid indicates regularity, which is not typically exhibited by random sets.
As indicated earlier, the spectral gap of $G$ is intimately related to the Fourier bias of $A$:

\begin{lemma}
\label{lemma.spectral gap vs fourier bias}
Consider the graph $G$ defined in Definition~\ref{definition.graph construction}.
The spectral gap of $G$ is
\begin{equation*}
\operatorname{spg}(G)
=1-\frac{M}{|A|}\|A\|_u,
\end{equation*}
where $\|\cdot\|_u$ denotes Fourier bias.
\end{lemma}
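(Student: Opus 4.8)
The plan is to diagonalize the adjacency matrix $\mathbf{A}_G$ by exploiting its block-circulant structure. By Definition~\ref{definition.graph construction}, whether $(k,m)$ and $(k',m')$ are adjacent depends only on $m'-m$ and not on $k,k'$; in particular the diagonal blocks $k=k'$ look just like the off-diagonal ones (and contain no loops, since $0\notin A$). Thus, ordering the $KM$ vertices so that the $m$-coordinate varies fastest, $\mathbf{A}_G$ factors as a Kronecker product
\[
\mathbf{A}_G=J_K\otimes C_A,
\]
where $J_K$ is the $K\times K$ all-ones matrix and $C_A$ is the $M\times M$ circulant matrix with $(C_A)_{m,m'}=\mathbf{1}_A(m'-m)$, i.e.\ $C_A=\sum_{a\in A}P^a$ for $P$ the cyclic shift on $\ell(\mathbb{Z}_M)$.

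Next I would read off the spectrum from the two factors. The eigenvalues of $J_K$ are $K$ (with multiplicity $1$) and $0$ (with multiplicity $K-1$). Since $C_A$ is circulant it is diagonalized by the DFT, and a direct computation gives its eigenvalues as $\mu_j=\sum_{a\in A}e^{-2\pi i ja/M}=M\,(F^*\mathbf{1}_A)(j)$ for $j\in\mathbb{Z}_M$; because $A=-A$, each $\mu_j$ is real. Hence the eigenvalues of $\mathbf{A}_G$ are precisely the products $K\mu_j$ for $j\in\mathbb{Z}_M$, together with $0$ with multiplicity $(K-1)M$.

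It then remains to substitute into the definition of the spectral gap. As noted after Definition~\ref{definition.graph construction}, $G$ is $d$-regular with $d=K|A|$, so $\lambda_1=d=K|A|$, which is realized by $K\mu_0=K|A|$. After removing one copy of $\lambda_1$, every remaining eigenvalue is either $0$ or of the form $K\mu_j$ with $j\neq0$, and $|K\mu_j|=KM\,|(F^*\mathbf{1}_A)(j)|\leq KM\|A\|_u$, with equality at the maximizing $j$; therefore $\max_{i\neq1}|\lambda_i|=KM\|A\|_u$. Plugging in,
\[
\operatorname{spg}(G)=\frac{1}{d}\Big(\lambda_1-\max_{i\neq1}|\lambda_i|\Big)=\frac{1}{K|A|}\big(K|A|-KM\|A\|_u\big)=1-\frac{M}{|A|}\|A\|_u.
\]

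This lemma is essentially a bookkeeping exercise, so I do not expect a genuine obstacle; the points to be careful about are that the diagonal blocks of $\mathbf{A}_G$ equal $C_A$ and not $C_A$ minus the identity (which is exactly the content of the hypothesis $0\notin A$), that the DFT normalization is tracked so the circulant eigenvalues come out as $M(F^*\mathbf{1}_A)(j)$ rather than $(F^*\mathbf{1}_A)(j)$, and that the zero eigenvalues contributed by the singular factor $J_K$ never exceed $KM\|A\|_u$ in modulus (trivially true). One can also note the degenerate case where $K|A|$ occurs among the $K\mu_j$ with $j\neq0$, forcing $A$ into a proper subgroup of $\mathbb{Z}_M$, in which $\|A\|_u=|A|/M$ and both sides of the claimed identity equal $0$, so the formula still holds.
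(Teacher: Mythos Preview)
Your proof is correct and follows essentially the same approach as the paper: factor the adjacency matrix as $J_K\otimes\operatorname{circ}(\mathbf{1}_A)$, read off the eigenvalues of the Kronecker product as products of eigenvalues, identify the circulant eigenvalues as $M(F^*\mathbf{1}_A)(j)$, and substitute into the definition of the spectral gap. Your write-up is in fact slightly more careful than the paper's, explicitly tracking the $(K-1)M$ zero eigenvalues coming from $J_K$ and noting the degenerate case where $\mu_j=|A|$ for some $j\neq0$.
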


\begin{proof}
To determine the spectral gap, we first determine the adjacency matrix $W$ of $G$.
For any pair $k,k'\in\{0,\ldots,K-1\}$, the adjacency rule for $(k,m)$ and $(k',m')$ is whether $m'-m\in A$.
As such, the $(k,k')$th block of $W$ is circulant, with each row being a translation of $\mathbf{1}_A$.
This gives the expression $W=J\otimes\operatorname{circ}(\mathbf{1}_A)$, where $J$ is the $K\times K$ all-ones matrix and $\otimes$ denotes the Kronecker product.
One useful property of the Kronecker product is that its eigenvalues are products of eigenvalues:
\begin{equation*}
\lambda_{(i,j)}(W)=\lambda_i(J)\lambda_j(\operatorname{circ}(\mathbf{1}_A)).
\end{equation*}
Here, we note that the only nontrivial eigenvalue of $J$ is $K$, and the eigenvalues of $\operatorname{circ}(\mathbf{1}_A)$ are the entries of the Fourier transform $MF^*\mathbf{1}_A$.
As such, the nontrivial eigenvalues of $W$ are given by
\begin{equation*}
\lambda_{(1,m)}(W)
=\lambda_1(J)\lambda_{m}(\operatorname{circ}(\mathbf{1}_A))
=KM(F^*\mathbf{1}_A)(m)
=K\sum_{m'\in\mathbb{Z}_M}\mathbf{1}_A(m')e^{-2\pi i mm'/M}.
\end{equation*}
By the triangle inequality, we then have
\begin{equation*}
|\lambda_{(1,m)}(W)|
=K\bigg|\sum_{m'\in\mathbb{Z}_M}\mathbf{1}_A(m')e^{-2\pi i mm'/M}\bigg|
\leq K\sum_{m'\in\mathbb{Z}_M}|\mathbf{1}_A(m')|
=K|A|,
\end{equation*}
with equality when $m=0$.
This means the largest eigenvalue of $W$ is $\lambda_1=K|A|$, which corresponds to the all-ones eigenvector, and so the spectral gap of $G$ is
\begin{equation*}
\operatorname{spg}(G)
=\frac{1}{d}\Big(\lambda_1-\max_{i\neq 1}|\lambda_i|\Big)
=\frac{1}{K|A|}\Big(K|A|-\max_{m\neq 0}|KM(F^*\mathbf{1}_A)(m)|\Big)
=1-\frac{M}{|A|}\|A\|_u,
\end{equation*}
as claimed.
\end{proof}

At this point, we have reduced the problem of finding injective Fourier masks to finding a small set $A\subseteq\mathbb{Z}_M$ of sufficiently small Fourier bias (i.e., a problem in additive combinatorics).
In the next section, we use the following random process to construct this set:
Draw $B\subseteq\mathbb{Z}_M$ at random and take $A:=B\cup(-B)\setminus\{0\}$.
With the appropriate choice of distribution, we construct $A$ of size $\Theta(\log M)$ and Fourier bias $\mathcal{O}(\frac{\log M}{M})$, thereby producing a graph $G$ with a spectral gap that is bounded away from zero (see Theorem~\ref{theorem.bound on spectral gap}).
We also show that the spectral gap is bounded away from zero only if $A$ has size $\Omega(\log M)$, thereby demonstrating the optimality of our analysis (see Theorem~\ref{theorem.log is necessary}).
Overall, letting $K$ be a sufficiently large constant, the techniques of this paper use a total of $K+3\binom{K+1}{2}|A|=\Theta(\log M)$ Fourier masks to uniquely determine any signal (see Theorem~\ref{theorem.main result}).

\section{Small sets with small Fourier bias}

In the previous section, we reduced the main problem of this paper to one in additive combinatorics:
Find a small set $A\subseteq\mathbb{Z}_M$ with small Fourier bias $\|A\|_u=\max_{m\neq 0}|(F^*\mathbf{1}_A)(m)|$.
In our case, we want $A$ to satisfy two additional properties: $0\not\in A$ and $A=-A$.
To account for these, we will approach this problem by first drawing $B\subseteq\mathbb{Z}_M$ at random, and then taking $A:=B\cup(-B)\setminus\{0\}$.
We start with the following lemma:

\begin{lemma}
\label{lemma.no collisions}
Suppose the entries of $\mathbf{1}_B$ are independent, identical Bernoulli random variables with mean $\frac{c\log M}{M}$.
Then the following simultaneously hold with high probability:
\begin{itemize}
\item[(i)] $B\cap(-B)=\emptyset$
\item[(ii)] $0\not\in B$
\item[(iii)] $\frac{1}{2}c\log M\leq|B|\leq \frac{3}{2}c\log M$
\end{itemize}
\end{lemma}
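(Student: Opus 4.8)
The plan is to prove each of the three claims by a standard application of concentration (or union-bound) arguments, exploiting that $\mathbf{1}_B$ has independent Bernoulli entries with mean $p=\frac{c\log M}{M}$, so that $\mathbb{E}|B|=c\log M$.

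First I would dispatch (ii): since $0\not\in B$ is just the event that the single coordinate indexed by $0$ is not selected, it fails with probability exactly $p=\frac{c\log M}{M}=o(1)$, so (ii) holds with high probability.

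Next I would handle (i). The event $B\cap(-B)\neq\emptyset$ means there exists $m\in\mathbb{Z}_M$ with $m\in B$ and $-m\in B$. For $m\neq 0$ (and $m\neq M/2$ when $M$ is even) these are two distinct coordinates, so $\mathbb{P}(m\in B,\,-m\in B)=p^2$; for the at most two self-paired values of $m$ the probability is just $p$. A union bound over all $m\in\mathbb{Z}_M$ then gives $\mathbb{P}(B\cap(-B)\neq\emptyset)\le M p^2 + 2p = \frac{c^2(\log M)^2}{M}+o(1)=o(1)$, so (i) holds with high probability.

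Finally, for (iii) I would invoke a Chernoff bound for the sum of i.i.d.\ Bernoulli variables $|B|=\sum_{m\in\mathbb{Z}_M}\mathbf{1}_B(m)$ with mean $\mu=c\log M$: the standard multiplicative tail bounds give
\begin{equation*}
\mathbb{P}\Big(|B|<\tfrac{1}{2}\mu\Big)\le e^{-\mu/8}
\qquad\text{and}\qquad
\mathbb{P}\Big(|B|>\tfrac{3}{2}\mu\Big)\le e^{-\mu/12},
\end{equation*}
and since $\mu=c\log M\to\infty$, both probabilities are $M^{-\Omega(1)}=o(1)$. A final union bound over the three (negligible) failure events shows that (i), (ii), and (iii) hold simultaneously with high probability, completing the proof. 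None of the steps is genuinely hard here; the only point requiring minor care is the treatment of the self-paired indices ($m=0$ and, for even $M$, $m=M/2$) in the union bound for (i), since for those the naive bound $p^2$ must be replaced by $p$ — but there are only $O(1)$ such indices, so they contribute only an $o(1)$ term and do not affect the conclusion.
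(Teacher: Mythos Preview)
Your proposal is correct and follows essentially the same approach as the paper: a direct probability for (ii), a union bound for (i), a multiplicative Chernoff bound for (iii), and a final union bound to combine. If anything, your treatment of (i) is slightly more careful than the paper's, which writes $\Pr(m,-m\in B)\le \Pr(m\in B)\Pr(-m\in B)=p^2$ for all $m$ without singling out the self-paired indices $m=0$ and (for even $M$) $m=M/2$; your observation that these contribute $p$ rather than $p^2$ is the right fix and, as you note, does not affect the $o(1)$ conclusion.
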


\begin{proof}
First, a union bound gives
\begin{align*}
\operatorname{Pr}\Big(B\cap(-B)\neq\emptyset\Big)
&=\operatorname{Pr}\Big(\exists m\in\mathbb{Z}_M \mbox{ s.t. }m,-m\in B\Big)\\
&\leq M\operatorname{Pr}\Big(m\in B\Big)\operatorname{Pr}\Big(-m\in B\Big)
=\frac{c^2\log^2M}{M}.
\end{align*}
Next, we have $\operatorname{Pr}(0\in B)=\frac{c\log M}{M}$.
For (iii), we apply a multiplicative form of the Chernoff bound (see (6) and (7) in~\cite{HagerupR:90}):
Let $X$ be a sum of independent 0-1 random variables.
Then
\begin{equation*}
\operatorname{Pr}\Big(\big|X-\mathbb{E}[X]\big|\geq\delta\mathbb{E}[X]\Big)
\leq 2e^{-\delta^2\mathbb{E}[X]/3}
\end{equation*}
for any choice $0<\delta<1$.
Here, we let $X:=\sum_{m\in\mathbb{Z}_M}\mathbf{1}_B(m)=|B|$ and take $\delta=1/2$.
Then
\begin{equation*}
\operatorname{Pr}\bigg(\big||B|-c\log M\big|\geq\frac{1}{2}c\log M\bigg)
\leq 2e^{-(c\log M)/12}
=2M^{-c/12}.
\end{equation*}
The result then follows from a union bound.
\end{proof}

Note that in the event of Lemma~\ref{lemma.no collisions}, we have $\mathbf{1}_A=\mathbf{1}_B+\mathbf{1}_{-B}$, and so
\begin{align}
\nonumber
\|A\|_u
=\max_{m\neq 0}|(F^*\mathbf{1}_A)(m)|
&\leq\max_{m\neq 0}\Big(|(F^*\mathbf{1}_B)(m)|+|(F^*\mathbf{1}_{-B})(m)|\Big)\\
\label{eq.bound on fourier bias}
&\leq\max_{m\neq 0}|(F^*\mathbf{1}_B)(m)|+\max_{m\neq 0}|(F^*\mathbf{1}_{-B})(m)|
=2\|B\|_u,
\end{align}
where the last step applied a complex conjugate to $(F^*\mathbf{1}_{-B})(m)$.
As such, it suffices to show that random sets $B$ have small Fourier bias:

\begin{lemma}
\label{lemma.small fourier bias}
Pick $c\geq4$ and suppose the entries of $\mathbf{1}_B$ are independent, identical Bernoulli random variables with mean $\frac{c\log M}{M}$.
Then $\|B\|_u\leq3\sqrt{c}\cdot\frac{\log M}{M}$ with high probability.
\end{lemma}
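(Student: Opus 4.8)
The plan is to bound the exponential sum at each nonzero frequency separately and then union bound over frequencies. Write $X_{m'}:=\mathbf{1}_B(m')$, so the $X_{m'}$ are i.i.d.\ Bernoulli with mean $p=\frac{c\log M}{M}$, and for $m\neq0$ set $S_m:=\sum_{m'\in\mathbb{Z}_M}X_{m'}e^{-2\pi imm'/M}$, so that $(F^*\mathbf{1}_B)(m)=\frac{1}{M}S_m$. The first key point is that $S_m$ has mean zero:
\[
\mathbb{E}S_m=p\sum_{m'\in\mathbb{Z}_M}e^{-2\pi imm'/M}=0,
\]
since the $M$ complex $M$th roots of unity sum to zero. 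Thus $|(F^*\mathbf{1}_B)(m)|=\frac{1}{M}|S_m|$ is the deviation of a sum of independent bounded random variables from its mean, and the lemma reduces to showing that $|S_m|\leq3\sqrt{c}\,\log M$ holds simultaneously for all $m\neq0$.

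Next I would split $S_m$ into real and imaginary parts: if $|S_m|>3\sqrt{c}\log M$ then at least one of $|\operatorname{Re}S_m|$, $|\operatorname{Im}S_m|$ exceeds $\tfrac{3}{\sqrt2}\sqrt{c}\log M$. Each of $\operatorname{Re}S_m$ and $\operatorname{Im}S_m$ has the form $\sum_{m'}X_{m'}c_{m'}$ for real coefficients $c_{m'}$ (cosines or sines of $2\pi mm'/M$) satisfying $|c_{m'}|\leq1$ and $\sum_{m'\in\mathbb{Z}_M}c_{m'}^2\leq M$. Hence this is a sum of independent, mean-zero random variables, each bounded by $1$ in absolute value, with total variance at most $p\sum_{m'}c_{m'}^2\leq Mp=c\log M$. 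Feeding the range bound $1$ and the variance bound $c\log M$ into Bernstein's inequality gives
\[
\operatorname{Pr}\bigg(\Big|\sum_{m'\in\mathbb{Z}_M}X_{m'}c_{m'}\Big|\geq t\bigg)\leq2\exp\left(-\frac{t^2/2}{c\log M+t/3}\right).
\]
Taking $t=\tfrac{3}{\sqrt2}\sqrt{c}\,\log M$, the numerator equals $\tfrac94c\log^2M$, while $c\geq4$ gives $\sqrt{c}\leq c/2$ and hence $t/3=\tfrac{1}{\sqrt2}\sqrt{c}\log M\leq\tfrac{1}{2\sqrt2}c\log M$, so the denominator is at most $(1+\tfrac{1}{2\sqrt2})c\log M<\tfrac32c\log M$; the exponent therefore exceeds $\tfrac32\log M$, and each tail probability is at most $2M^{-3/2}$.

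A union bound over the $2(M-1)$ events---the real and imaginary parts for each of the $M-1$ nonzero frequencies $m$---then yields
\[
\operatorname{Pr}\Big(\|B\|_u>3\sqrt{c}\cdot\frac{\log M}{M}\Big)\leq 4(M-1)M^{-3/2}\leq 4M^{-1/2}\longrightarrow0,
\]
which is the claim. I expect the main obstacle to be purely quantitative: getting the constant to come out as $3$ (rather than something growing with $M$). Hoeffding's inequality is inadequate here because the coefficients $c_{m'}$ have total squared range $\sum c_{m'}^2\approx M$, which would inflate the deviation scale to $\sqrt{M\log M}\gg\sqrt{c}\log M$; one must instead use Bernstein's inequality to exploit that the relevant variance is only $\approx c\log M$---reflecting the sparsity of $B$, since only about $c\log M$ of the $X_{m'}$ are nonzero---and then check that in this regime the linear term $t/3$ in Bernstein's denominator is dominated by the variance term, which is exactly what the hypothesis $c\geq4$ secures.
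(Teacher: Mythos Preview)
Your proof is correct and takes essentially the same approach as the paper: fix a nonzero frequency, note the exponential sum has mean zero, apply a variance-sensitive concentration inequality to the real and imaginary parts, and union bound over the $M-1$ frequencies. The only difference is cosmetic---you invoke Bernstein's inequality directly and bound the variance crudely by $Mp=c\log M$, whereas the paper packages the real/imaginary split into its Lemma~\ref{lemma.chernoff bound} (a complex Chernoff bound derived from Tao--Vu) and computes the variance exactly as $\tfrac14 Mp(1-p)$; both routes yield the same conclusion with the same use of the hypothesis $c\geq4$ to control the sub-exponential tail.
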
 

To prove this lemma, we will apply the following version of the Chernoff bound:

\begin{lemma}
\label{lemma.chernoff bound}
Assume that $\{Z_i\}_{i=1}^n$ are jointly independent complex random variables where $|Z_i-\mathbb{E}[Z_i]|\leq 1$ for all $i$.
Take $Z:=\sum_{i=1}^nZ_i$ and denote $\sigma:=\sqrt{\operatorname{Var}(Z)}$.
Then
\begin{equation*}
\operatorname{Pr}\Big(\big|Z-\mathbb{E}[Z]\big|\geq t\sigma\Big)
\leq 4\max\Big\{e^{-t^2/8},e^{-t\sigma/2\sqrt{2}}\Big\}
\end{equation*}
for any $t>0$.
\end{lemma}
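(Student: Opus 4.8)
The plan is to reduce the complex tail bound to a real one by handling real and imaginary parts separately, and then to invoke a standard (two-sided) Bernstein inequality for sums of bounded, independent, mean-zero real random variables.

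First I would set $W:=Z-\mathbb{E}[Z]$ and write $W=X+iY$ with $X:=\operatorname{Re}W$, $Y:=\operatorname{Im}W$, and correspondingly $Z_i-\mathbb{E}[Z_i]=X_i+iY_i$. Then the $X_i$ are jointly independent, have mean zero, and satisfy $|X_i|\le|Z_i-\mathbb{E}[Z_i]|\le1$; the same holds for the $Y_i$. With the standard convention $\operatorname{Var}(Z)=\mathbb{E}|Z-\mathbb{E}[Z]|^2$, the identity $|W|^2=X^2+Y^2$ together with independence gives $\sigma^2=\operatorname{Var}(X)+\operatorname{Var}(Y)$; write $\sigma_X^2:=\operatorname{Var}(X)$ and $\sigma_Y^2:=\operatorname{Var}(Y)$, so that $\sigma_X^2,\sigma_Y^2\le\sigma^2$. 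If $\sigma=0$ the left-hand side is zero for every $t>0$ and there is nothing to prove, so assume $\sigma>0$. Since $|W|^2=X^2+Y^2$, the event $|W|\ge t\sigma$ forces $|X|\ge t\sigma/\sqrt2$ or $|Y|\ge t\sigma/\sqrt2$, and hence a union bound yields
\[
\operatorname{Pr}\big(|W|\ge t\sigma\big)\le\operatorname{Pr}\!\Big(|X|\ge\tfrac{t\sigma}{\sqrt2}\Big)+\operatorname{Pr}\!\Big(|Y|\ge\tfrac{t\sigma}{\sqrt2}\Big).
\]

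Next I would bound each term by the classical Bernstein inequality (which follows from the usual moment-generating-function estimate): for independent mean-zero reals $X_i$ with $|X_i|\le1$ and any $s>0$,
\[
\operatorname{Pr}\big(|X|\ge s\big)\le2\exp\!\Big(-\frac{s^2/2}{\sigma_X^2+s/3}\Big),
\]
and likewise for $Y$. Applying this with $s=t\sigma/\sqrt2$ and splitting on the size of $s$ relative to $\sigma_X^2$: if $s\le3\sigma_X^2$ then $\sigma_X^2+s/3\le2\sigma_X^2\le2\sigma^2$, so the bound is at most $2\exp(-s^2/(4\sigma^2))=2e^{-t^2/8}$; if $s>3\sigma_X^2$ then $\sigma_X^2+s/3<2s/3$, so the bound is at most $2\exp(-3s/4)\le2\exp(-s/2)=2e^{-t\sigma/(2\sqrt2)}$ (and if $\sigma_X=0$ the probability is simply $0$). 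In all cases $\operatorname{Pr}(|X|\ge s)\le2\max\{e^{-t^2/8},e^{-t\sigma/(2\sqrt2)}\}$, and the same for $Y$; adding the two estimates produces the factor $4$ and the stated inequality.

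There is no genuine difficulty here; the only things requiring care are (a) verifying that the passage from $|W|\ge t\sigma$ to the coordinatewise events costs exactly the factor $1/\sqrt2$ that makes $s^2/(4\sigma^2)=t^2/8$ and $3s/4\ge s/2=t\sigma/(2\sqrt2)$ come out on the nose, (b) treating the degenerate cases $\sigma=0$ and $\sigma_X=0$ (or $\sigma_Y=0$) separately so that the Bernstein ratio is never of the form $0/0$, and (c) stating the variance convention for complex $Z$ explicitly so the decomposition $\sigma^2=\sigma_X^2+\sigma_Y^2$ is unambiguous.
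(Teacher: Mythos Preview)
Your proof is correct and follows essentially the same route as the paper: split $Z-\mathbb{E}[Z]$ into real and imaginary parts, use $|W|\ge t\sigma\Rightarrow|X|\ge t\sigma/\sqrt2$ or $|Y|\ge t\sigma/\sqrt2$, and then invoke a real Bernstein/Chernoff bound on each piece. The only difference is cosmetic: the paper first weakens the threshold from $t\sigma/\sqrt2$ to $t\sigma_X/\sqrt2$ and then cites Theorem~1.8 of Tao--Vu as a black box, whereas you keep $s=t\sigma/\sqrt2$ and carry out the Bernstein case split $s\lessgtr 3\sigma_X^2$ explicitly, using $\sigma_X^2\le\sigma^2$ only inside the sub-Gaussian branch; this makes your derivation of the $e^{-t\sigma/(2\sqrt2)}$ term arguably cleaner, since it never routes through $\sigma_X$ in the exponential regime.
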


\begin{proof}
Let $X$ and $Y$ denote the real and imaginary parts of $Z$.
Then a union bound gives
\begin{align*}
\operatorname{Pr}\Big(\big|Z-\mathbb{E}[Z]\big|\geq t\sigma\Big)
&=\operatorname{Pr}\Big(\big|X-\mathbb{E}[X]\big|^2+\big|Y-\mathbb{E}[Y]\big|^2\geq t^2\sigma^2\Big)\\
&\leq\operatorname{Pr}\bigg(\big|X-\mathbb{E}[X]\big|\geq \frac{t\sigma}{\sqrt{2}}\bigg)+\operatorname{Pr}\bigg(\big|Y-\mathbb{E}[Y]\big|\geq \frac{t\sigma}{\sqrt{2}}\bigg).
\end{align*}
Denote $\sigma_X:=\sqrt{\operatorname{Var}(X)}$ and $\sigma_Y:=\sqrt{\operatorname{Var}(Y)}$.
Since $\operatorname{Var}(X)+\operatorname{Var}(Y)=\operatorname{Var}(Z)$, we then have $\sigma_X\leq\sigma$ and $\sigma_Y\leq\sigma$, and so
\begin{equation*}
\operatorname{Pr}\Big(\big|Z-\mathbb{E}[Z]\big|\geq t\sigma\Big)
\leq\operatorname{Pr}\bigg(\big|X-\mathbb{E}[X]\big|\geq \frac{t\sigma_X}{\sqrt{2}}\bigg)+\operatorname{Pr}\bigg(\big|Y-\mathbb{E}[Y]\big|\geq \frac{t\sigma_Y}{\sqrt{2}}\bigg).
\end{equation*}
The result then follows from applying Theorem~1.8 of~\cite{TaoV:06} to both terms of the right-hand side.
\end{proof}

\begin{proof}[Proof of Lemma~\ref{lemma.small fourier bias}]
Fix $m\in\mathbb{Z}_M$, $m\neq0$, and take $Z_{m'}:=\frac{1}{2}\mathbf{1}_B(m')e^{-2\pi imm'/M}$ for each $m'\in\mathbb{Z}_M$.
Then $\{Z_{m'}\}_{m'\in\mathbb{Z}_M}$ are jointly independent with 
\begin{equation*}
|Z_{m'}-\mathbb{E}[Z_{m'}]|
\leq |Z_{m'}|+|\mathbb{E}[Z_{m'}]|
\leq 1.
\end{equation*}
As such, $\{Z_{m'}\}_{m'\in\mathbb{Z}_M}$ satisfy the conditions of Lemma~\ref{lemma.chernoff bound}, and so we now seek the expected value and variance of 
\begin{equation*}
Z
:=\sum_{m'\in\mathbb{Z}_M}Z_{m'}
=\frac{1}{2}\sum_{m'\in\mathbb{Z}_M}\mathbf{1}_B(m')e^{-2\pi imm'/M}.
\end{equation*}
Take $p:=\frac{c\log M}{M}$ to be the probability that $\mathbf{1}_B(m')$ is $1$.
Then for each $S\subseteq\mathbb{Z}_M$, the probability that $B=S$ is $p^{|S|}(1-p)^{M-|S|}$.
This leads to the following calculation:
\begin{align*}
\mathbb{E}[Z]
&=\sum_{S\subseteq\mathbb{Z}_M}\bigg(\frac{1}{2}\sum_{m'\in\mathbb{Z}_M}\mathbf{1}_S(m')e^{-2\pi imm'/M}\bigg)p^{|S|}(1-p)^{M-|S|}\\
&=\frac{1}{2}\sum_{\substack{S\subseteq\mathbb{Z}_M\\S\neq\emptyset}}\sum_{m'\in S} e^{-2\pi imm'/M}p^{|S|}(1-p)^{M-|S|}\\
&=\frac{1}{2}\sum_{m'\in\mathbb{Z}_M}e^{-2\pi imm'/M}\sum_{S\supseteq\{m'\}}p^{|S|}(1-p)^{M-|S|}.
\end{align*}
From here, we apply the binomial theorem to get
\begin{equation*}
\sum_{S\supseteq\{m'\}}p^{|S|}(1-p)^{M-|S|}
=p\sum_{T\subseteq\mathbb{Z}_M\setminus\{m'\}}p^{|T|}(1-p)^{(M-1)-|T|}
=p,
\end{equation*}
and so $\mathbb{E}[Z]=0$ by the geometric sum formula (recall that $m\neq0$ by choice).
Next, we compute $\operatorname{Var}(Z)=\mathbb{E}[|Z-\mathbb{E}[Z]|^2]=\mathbb{E}[|Z|^2]=\mathbb{E}[Z\overline{Z}]$:
\begin{align*}
\operatorname{Var}(Z)
&=\sum_{S\subseteq\mathbb{Z}_M}\bigg(\frac{1}{2}\sum_{m'\in\mathbb{Z}_M}\mathbf{1}_S(m')e^{-2\pi imm'/M}\bigg)\bigg(\frac{1}{2}\sum_{m''\in\mathbb{Z}_M}\mathbf{1}_S(m'')e^{2\pi imm''/M}\bigg)p^{|S|}(1-p)^{M-|S|}\\
&=\frac{1}{4}\sum_{m'\in\mathbb{Z}_M}\sum_{m''\in\mathbb{Z}_M}e^{2\pi im(m''-m')}\sum_{S\supseteq\{m',m''\}}p^{|S|}(1-p)^{M-|S|}.
\end{align*}
At this point, we break the sum over $m''\in\mathbb{Z}_M$ into cases in which $m''=m'$ and $m''\neq m'$:
\begin{align*}
\operatorname{Var}(Z)
&=\frac{1}{4}\sum_{\substack{m'\in\mathbb{Z}_M\\m''=m'}}e^{2\pi im(m''-m')}\sum_{S\supseteq\{m',m''\}}p^{|S|}(1-p)^{M-|S|}\\
&\qquad+\frac{1}{4}\sum_{m'\in\mathbb{Z}_M}\sum_{\substack{m''\in\mathbb{Z}_M\\m''\neq m'}}e^{2\pi im(m''-m')}\sum_{S\supseteq\{m',m''\}}p^{|S|}(1-p)^{M-|S|}\\
&=\frac{1}{4}Mp+\frac{1}{4}\sum_{m'\in\mathbb{Z}_M}\sum_{\substack{m''\in\mathbb{Z}_M\\m''\neq m'}}e^{2\pi im(m''-m')}\sum_{S\supseteq\{m',m''\}}p^{|S|}(1-p)^{M-|S|}.
\end{align*}
Here, another application of the binomial theorem gives
\begin{equation*}
\sum_{S\supseteq\{m',m''\}}p^{|S|}(1-p)^{M-|S|}
=p^2\sum_{T\subseteq\mathbb{Z}_M\setminus\{m',m''\}}p^{|T|}(1-p)^{(M-2)-|T|}
=p^2,
\end{equation*}
and so by the geometric sum formula, we have
\begin{align*}
\operatorname{Var}(Z)
&=\frac{1}{4}Mp+\frac{1}{4}p^2\sum_{m'\in\mathbb{Z}_M}\sum_{\substack{m''\in\mathbb{Z}_M\\m''\neq m'}}e^{2\pi im(m''-m')}\\
&=\frac{1}{4}Mp+\frac{1}{4}p^2\sum_{m'\in\mathbb{Z}_M}(-1)
=\frac{1}{4}Mp-\frac{1}{4}Mp^2
=\frac{1}{4}Mp(1-p).
\end{align*}
Having calculated the expected value and variance of $Z$, we are now ready to use Lemma~\ref{lemma.chernoff bound} (and the fact that $Z=\frac{M}{2}(F^*\mathbf{1}_B)(m)$).
A union bound gives
\begin{align*}
\operatorname{Pr}\bigg(\|B\|_u\geq\frac{2t\sigma}{M}\bigg)
&\leq(M-1)\operatorname{Pr}\bigg(|(F^*\mathbf{1}_B)(m)|\geq\frac{2t\sigma}{M}\bigg)\\
&=(M-1)\operatorname{Pr}\Big(\big|Z-\mathbb{E}[Z]\big|\geq t\sigma\Big)
\leq4(M-1)\max\Big\{e^{-t^2/8},e^{-t\sigma/2\sqrt{2}}\Big\}.
\end{align*}
We select $t=(\sqrt{2c(1+\varepsilon)}\log M)/\sigma$ and simplify the exponents in our probability bound:
\begin{equation*}
\frac{t^2}{8}
=\frac{c(1+\varepsilon)\log^2 M}{Mp(1-p)}
\geq\frac{c(1+\varepsilon)\log^2 M}{Mp}
=(1+\varepsilon)\log M,
\qquad
\frac{t\sigma}{2\sqrt{2}}
=\frac{\sqrt{c(1+\varepsilon)}}{2}\log M.
\end{equation*}
With this choice of $t$, we continue:
\begin{align*}
\operatorname{Pr}\bigg(\|B\|_u\geq\sqrt{8c(1+\varepsilon)}\cdot\frac{\log M}{M}\bigg)
&\leq 4(M-1)\max\Big\{e^{-(1+\varepsilon)\log M},e^{-(\sqrt{c(1+\varepsilon)}/2)\log M}\Big\}\\
&\leq 4\max\Big\{M^{-\varepsilon},M^{1-\sqrt{c(1+\varepsilon)}/2}\Big\}.
\end{align*}
Since $c\geq4$, we therefore have that $\|B\|_u\leq 3\sqrt{c}\cdot\frac{\log M}{M}$ with high probability.
\end{proof}

We can now combine Lemmas~\ref{lemma.no collisions} and~\ref{lemma.small fourier bias} to produce a graph $G$ of the form in Definition~\ref{definition.graph construction} with a large spectral gap:

\begin{theorem}
\label{theorem.bound on spectral gap}
Pick $c\geq4$ and suppose the entries of $\mathbf{1}_B$ are independent, identical Bernoulli random variables with mean $\frac{c\log M}{M}$.
Take $A:=B\cup(-B)\setminus\{0\}$ and define $G$ according to Definition~\ref{definition.graph construction}.
Then
\begin{equation*}
\operatorname{spg}(G)
\geq1-\frac{6}{\sqrt{c}}
\end{equation*}
with high probability.
\end{theorem}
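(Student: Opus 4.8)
The plan is to chain together the results already assembled in this section. By Lemma~\ref{lemma.spectral gap vs fourier bias}, the graph $G$ from Definition~\ref{definition.graph construction} satisfies
\begin{equation*}
\operatorname{spg}(G)=1-\frac{M}{|A|}\|A\|_u,
\end{equation*}
so it suffices to show that $\frac{M}{|A|}\|A\|_u\leq\frac{6}{\sqrt c}$ with high probability. This requires a lower bound on $|A|$ and an upper bound on $\|A\|_u$, both of which come from the structure of the random set $B$.

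First I would invoke Lemma~\ref{lemma.no collisions}: with high probability, $B\cap(-B)=\emptyset$, $0\not\in B$, and $\frac12 c\log M\leq|B|\leq\frac32 c\log M$. On this event, $A=(B\cup(-B))\setminus\{0\}$ is just the disjoint union $B\cup(-B)$ (nothing is removed, since $0\not\in B$ forces $0\not\in -B$), so $|A|=2|B|\geq c\log M$. Moreover, this same event is precisely what licenses the inequality $\|A\|_u\leq 2\|B\|_u$ derived in~\eqref{eq.bound on fourier bias}.

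Next I would apply Lemma~\ref{lemma.small fourier bias} (valid since $c\geq4$), which gives $\|B\|_u\leq 3\sqrt c\cdot\frac{\log M}{M}$ with high probability. Combining the three estimates on the event where all of them hold,
\begin{equation*}
\frac{M}{|A|}\|A\|_u\leq\frac{M}{c\log M}\cdot 2\|B\|_u\leq\frac{M}{c\log M}\cdot 2\cdot 3\sqrt c\cdot\frac{\log M}{M}=\frac{6}{\sqrt c},
\end{equation*}
and hence $\operatorname{spg}(G)\geq 1-\frac{6}{\sqrt c}$. A union bound over the failure events of Lemmas~\ref{lemma.no collisions} and~\ref{lemma.small fourier bias} shows that all the required estimates hold simultaneously with high probability.

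There is no serious obstacle here; the substance of the theorem lies entirely in the two preceding lemmas. The only point needing a little care is that both lemmas concern the \emph{same} random set $B$, so one must intersect their high-probability events rather than treat them as independent — which is harmless since each has probability tending to $1$.
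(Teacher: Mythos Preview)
Your proposal is correct and essentially identical to the paper's proof: both invoke Lemma~\ref{lemma.spectral gap vs fourier bias}, then combine Lemma~\ref{lemma.no collisions} (to get $|A|=2|B|\geq c\log M$ and to justify \eqref{eq.bound on fourier bias}) with Lemma~\ref{lemma.small fourier bias}, yielding the same chain of inequalities and the same union-bound remark. The only cosmetic difference is that the paper passes through the intermediate bound $\tfrac{M}{|A|}\|A\|_u\leq\tfrac{M}{|B|}\|B\|_u$ before substituting $|B|\geq\tfrac12 c\log M$, whereas you substitute $|A|\geq c\log M$ directly.
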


\begin{proof}
We will assume that the events of Lemmas~\ref{lemma.no collisions} and~\ref{lemma.small fourier bias} hold simultaneously, as they will with high probability.
Then $|A|=2|B|$, and furthermore by \eqref{eq.bound on fourier bias}, $\|A\|_u\leq2\|B\|_u$.
Starting with Lemma~\ref{lemma.spectral gap vs fourier bias}, we then have
\begin{equation*}
\operatorname{spg}(G)
=1-\frac{M}{|A|}\|A\|_u
\geq1-\frac{M}{|B|}\|B\|_u
\geq1-\frac{M}{\frac{1}{2}c\log M}\cdot3\sqrt{c}\cdot\frac{\log M}{M}
=1-\frac{6}{\sqrt{c}},
\end{equation*}
where the second inequality applies Lemmas~\ref{lemma.no collisions}(iii) and~\ref{lemma.small fourier bias}.
\end{proof}

We now apply Lemma~\ref{lemma.spectral gap vs connectivity} to show that $\mathcal{O}(\log M)$ Fourier masks suffice for injectivity:

\begin{theorem}
\label{theorem.main result}
Take $K=12$, $c=144$, suppose the entries of $\mathbf{1}_B$ are independent, identical Bernoulli random variables with mean $\frac{c\log M}{M}$, and take $A:=B\cup(-B)\setminus\{0\}$.
Then with high probability, the $\leq 2\cdot10^5\cdot\log M$ masks 
\begin{equation*}
\{D_k\}_{k=0}^{K-1}\cup\{D_k+\omega^rE^aD_{k'}\}_{k=0,}^{K-1}\ _{k'=0,}^{k}\ _{r=0,}^{2}\ _{a\in A}^{},
\end{equation*}
as described in Lemmas~\ref{lemma.full spark phi_v} and~\ref{lemma.modulation trick}, lend injective intensity measurements.
\end{theorem}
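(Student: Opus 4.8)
The plan is to assemble the pieces developed above: fix the vertex masks so that $\Phi_V$ is full spark, identify the listed intensity measurements with the vertex and edge data of the graph $G$ from Definition~\ref{definition.graph construction}, lower bound $\operatorname{spg}(G)$ via Theorem~\ref{theorem.bound on spectral gap}, run the polarization recovery using Lemma~\ref{lemma.spectral gap vs connectivity}, and finally track the constants to obtain the claimed mask count.

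First I would fix the parameters $\alpha_k$ in Lemma~\ref{lemma.full spark phi_v}, e.g.\ $\alpha_k=e^{2\pi ik/(KM)}$ (or i.i.d.\ uniform draws from the unit circle), so that $\Phi_V=\{D_kf_m\}$ is full spark; this is a deterministic choice that does not affect the probabilistic claim. Next I would unpack the measurements: entry $m$ of $F^*D_k^*x$ equals $\langle x,D_kf_m\rangle$, and entry $m$ of $F^*(D_k+\omega^rE^aD_{k'})^*x$ equals $\langle x,(D_k+\omega^rE^aD_{k'})f_m\rangle=\langle x,D_kf_m+\omega^rD_{k'}f_{m+a}\rangle$ by Lemma~\ref{lemma.modulation trick}. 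Hence the full collection of intensities supplies $\{|\langle x,\varphi_i\rangle|^2\}_{i\in V}$ together with $\{|\langle x,\varphi_i+\omega^r\varphi_j\rangle|^2\}_{r=0}^2$ for every edge $(i,j)$ of $G$ with $A:=B\cup(-B)\setminus\{0\}$; here one checks that the restriction $k'\le k$ in the mask list loses nothing, since $A=-A$ lets one swap the two endpoints of an edge after replacing $r$ by $-r$ and $a$ by $-a$, which only permutes the three measurements attached to that edge.

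Second, I would invoke Theorem~\ref{theorem.bound on spectral gap} with $c=144\ge4$ to obtain $\operatorname{spg}(G)\ge1-6/\sqrt{144}=\tfrac12$ with high probability, and condition on this event (together with the full spark property, which holds deterministically). Now fix any nonzero $x$. Since $\Phi_V$ is full spark, $\langle x,\varphi_i\rangle=0$ for at most $M-1$ indices $i$, so deleting those vertices and their incident edges removes at most $(M-1)d$ edges, where $d=K|A|$ is the common degree and $n=KM=|V|$. Taking $\varepsilon:=1/K=1/12$, we have $(M-1)d<(n/K)d=\varepsilon dn$ and $\varepsilon=\operatorname{spg}(G)/6$, so Lemma~\ref{lemma.spectral gap vs connectivity} yields a surviving connected component $S$ with $|S|\ge(1-2\varepsilon/\operatorname{spg}(G))n\ge(1-\tfrac13)\cdot12M=8M\ge M$. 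Every $i\in S$ has $\langle x,\varphi_i\rangle\ne0$, so \eqref{eq.polarization trick} recovers each relative phase $\overline{\langle x,\varphi_i\rangle}\langle x,\varphi_j\rangle$ along the edges of $S$; propagating along a spanning tree of $S$ determines $\{\langle x,\varphi_i\rangle\}_{i\in S}$ up to a single global phase, and since $|S|\ge M$ and $\Phi_V$ is full spark, $\{\varphi_i\}_{i\in S}$ spans $\mathbb{C}^M$ and $x$ is recovered up to global phase by least squares. For the mask count, on the event of Lemma~\ref{lemma.no collisions} we have $|A|=2|B|\le3c\log M=432\log M$, hence the total number of masks is $K+3\binom{K+1}{2}|A|\le12+3\cdot78\cdot432\log M<2\cdot10^5\log M$ for $M$ large.

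The one genuinely delicate point is the joint choice of $K$ and $c$: one needs an $\varepsilon$ satisfying both $\varepsilon\le\operatorname{spg}(G)/6$ and $(1-2\varepsilon/\operatorname{spg}(G))\,KM\ge M$, and with the guaranteed $\operatorname{spg}(G)\ge\tfrac12$ these become $\varepsilon\le1/12$ and $\varepsilon\le(1-1/K)/4$; setting $\varepsilon=1/K$ forces $K\ge12$, while $c=144$ is precisely what the bound $\operatorname{spg}(G)\ge1-6/\sqrt{c}$ requires to reach $\tfrac12$. The secondary thing to handle carefully is that Lemma~\ref{lemma.spectral gap vs connectivity} is stated in terms of deleting \emph{edges}, so one must observe explicitly that deleting $M-1$ vertices of a $d$-regular graph deletes at most $(M-1)d$ edges and that this fits inside the allowed budget $\varepsilon dn$.
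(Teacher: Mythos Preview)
Your proposal is correct and follows essentially the same route as the paper's proof: translate vertex deletions into edge deletions, set $\varepsilon=1/K$, feed the spectral-gap bound from Theorem~\ref{theorem.bound on spectral gap} into Lemma~\ref{lemma.spectral gap vs connectivity}, and optimize the resulting constraints to land on $K=12$, $c=144$, with the same mask count $K+9\binom{K+1}{2}c\log M$. The only cosmetic slip is writing ``$\varepsilon=\operatorname{spg}(G)/6$'' where you mean ``$\varepsilon\le\operatorname{spg}(G)/6$'' (equality holds only at the worst-case value $\operatorname{spg}(G)=\tfrac12$); otherwise your argument matches the paper's, and your explicit handling of the $k'\le k$ restriction and the measurement identification via Lemma~\ref{lemma.modulation trick} is a welcome addition.
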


\begin{proof}
Note that $G$ has $n=KM$ vertices and is $d$-regular with $d=K|A|$.
We need to be robust to the removal of any $M-1$ vertices, which in turn removes $d(M-1)$ edges, and so it suffices to be robust to the removal of any $dM$ edges.
As such, we observe Lemma~\ref{lemma.spectral gap vs connectivity} and take $\varepsilon$ to satisfy
\begin{equation}
\label{eq.first requirement on spg}
\frac{1}{K}
=\frac{dM}{dn}
=\varepsilon
\leq\frac{\operatorname{spg}(G)}{6}.
\end{equation}
We also want a connected component of size $M$ after the removal of these edges, and so by Lemma~\ref{lemma.spectral gap vs connectivity}, it suffices to have
\begin{equation}
\label{eq.second requirement on spg}
M
\leq\bigg(1-\frac{2\varepsilon}{\operatorname{spg}(G)}\bigg)n
=\bigg(1-\frac{2}{K\operatorname{spg}(G)}\bigg)KM.
\end{equation}
Rearranging \eqref{eq.first requirement on spg} and \eqref{eq.second requirement on spg} produces the following specification on the spectral gap of $G$ for sufficient connectivity:
\begin{equation*}
\operatorname{spg}(G)
\geq\max\bigg\{\frac{6}{K},\frac{2}{K-1}\bigg\}
=\frac{6}{K},
\end{equation*}
where the equality is valid provided $K\geq2$.
To meet this specification, based on Theorem~\ref{theorem.bound on spectral gap}, it suffices to have $1-6/\sqrt{c}\geq 6/K$, which requires $K>6$ and is equivalent to having $c\geq(1/6-1/K)^{-2}$.
Using Lemma~\ref{lemma.no collisions}, the total number of masks is
\begin{equation*}
K+3\binom{K+1}{2}|A|
=K+6\binom{K+1}{2}|B|
\leq K+9\binom{K+1}{2}c\log M.
\end{equation*}
Setting $c=(1/6-1/K)^{-2}$, then $K=12$ minimizes the coefficient of $\log M$.
\end{proof}

At this point, we note that $2\cdot10^5\cdot\log M$ is a rather large number of Fourier masks.
Certainly, the $10^5$ might be an artifact of our analysis---perhaps it could be decreased by leveraging better approximations.
However, as the next result shows, the log factor is necessary for the Fourier masks to lend polarization-based recovery:

\begin{theorem}
\label{theorem.log is necessary}
Take $G$ as defined in Definition~\ref{definition.graph construction}.
Then $\operatorname{spg}(G)>\varepsilon$ only if 
\begin{equation*}
|A|
\geq\frac{\log M}{2+\log(1/\varepsilon)}.
\end{equation*}
\end{theorem}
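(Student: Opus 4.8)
The plan is to prove the contrapositive: if $|A| < \frac{\log M}{2+\log(1/\varepsilon)}$ then $\operatorname{spg}(G)\le\varepsilon$. We may assume $0<\varepsilon<1$, since for $\varepsilon\ge 1$ the hypothesis is vacuous (and we tacitly assume $A\neq\emptyset$, else $G$ is edgeless and $\operatorname{spg}(G)$ undefined). By Lemma~\ref{lemma.spectral gap vs fourier bias} and its proof, $\operatorname{spg}(G)=1-\frac{1}{|A|}\max_{m\neq 0}|\lambda_m|$, where $\lambda_m:=\sum_{a\in A}e^{-2\pi i ma/M}$ is the eigenvalue of $\operatorname{circ}(\mathbf{1}_A)$ indexed by $m$. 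Hence it suffices to exhibit a single $m\in\mathbb{Z}_M\setminus\{0\}$ with $|\lambda_m|\ge(1-\varepsilon)|A|$: the point is that the $|A|$ unit vectors $\{e^{-2\pi i ma/M}\}_{a\in A}$ add to something of nearly full length exactly when they are confined to a short arc, i.e.\ when $\|ma/M\|$ (distance from $ma/M$ to the nearest integer) is uniformly small over $a\in A$. So we look for a nonzero $m$ that makes every fractional part $\{ma/M\}$ close to $0$.

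To produce such an $m$ I would invoke Dirichlet's pigeonhole principle. Write $A=\{a_1,\dots,a_{|A|}\}$, fix an integer $N\ge 2$ to be chosen below, and partition $[0,1)^{|A|}$ into the $N^{|A|}$ boxes of side $1/N$. Among the $N^{|A|}+1$ points $\big(\{ma_1/M\},\dots,\{ma_{|A|}/M\}\big)$, $m=0,1,\dots,N^{|A|}$, two share a box, say at indices $m_1<m_2$; then $m:=m_2-m_1\in\{1,\dots,N^{|A|}\}$ has $\|ma/M\|<1/N$ for every $a\in A$ (the two points agreeing within $1/N$ in each coordinate). Provided $N^{|A|}<M$, this $m$ lies in $\{1,\dots,M-1\}$, hence is a genuine nonzero element of $\mathbb{Z}_M$. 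For it, $\operatorname{Re}\,e^{-2\pi i ma/M}=\cos(2\pi\|ma/M\|)\ge\cos(2\pi/N)$ (cosine being even, $2\pi$-periodic, and decreasing on $[0,\pi]$), so $|\lambda_m|\ge\operatorname{Re}\,\lambda_m\ge|A|\cos(2\pi/N)$, and therefore $\operatorname{spg}(G)\le 1-\cos(2\pi/N)$.

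It remains to choose $N$ so that simultaneously $1-\cos(2\pi/N)\le\varepsilon$ and $N^{|A|}<M$ under the standing hypothesis. I would take $N:=\lceil\sqrt2\,\pi/\sqrt\varepsilon\,\rceil$. Then $2\pi/N\le\sqrt{2\varepsilon}<\pi$, so $1-\cos(2\pi/N)\le\tfrac12(2\pi/N)^2\le\varepsilon$. Also, since $0<\varepsilon<1$ we have $N\le\sqrt2\,\pi/\sqrt\varepsilon+1\le(\sqrt2\,\pi+1)/\varepsilon<e^{2}/\varepsilon$, hence $\log N<2+\log(1/\varepsilon)$; together with $|A|<\frac{\log M}{2+\log(1/\varepsilon)}$ this gives $|A|\log N<\log M$, i.e.\ $N^{|A|}<M$. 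Chaining the two estimates, $\operatorname{spg}(G)\le 1-\cos(2\pi/N)\le\varepsilon$, contradicting $\operatorname{spg}(G)>\varepsilon$ and completing the contrapositive.

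The argument has no deep obstacle; the one place needing care is the constant-bookkeeping in the last paragraph. The clean additive term $2$ survives only because $1-\cos(2\pi/N)$ is \emph{quadratically} small in $1/N$, which lets us get away with $N$ of order $1/\sqrt\varepsilon$ (rather than $1/\varepsilon$) and so fold $\log N$ into a small constant. It is worth noting that the pigeonhole step is essential: a naive second-moment estimate (trace of $\operatorname{circ}(\mathbf{1}_A)^2$) only yields $|A|=\Omega(1)$, far short of the desired $\Omega(\log M)$.
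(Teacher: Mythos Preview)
Your proof is correct, and it takes a genuinely different route from the paper's. The paper interprets the nontrivial eigenvalues of $G$ as inner products between the $M$ unit-norm columns $v_m$ of the $|A|\times M$ Fourier submatrix indexed by $A$; the hypothesis $\operatorname{spg}(G)>\varepsilon$ then says these columns have worst-case coherence at most $1-\varepsilon$, hence pairwise distance at least $\sqrt{2\varepsilon}$ in $\mathbb{R}^{2|A|}$. A volume comparison of the disjoint balls of radius $\delta/2$ about the $v_m$'s inside the ball of radius $1+\delta/2$ about the origin yields $|A|\ge \log M / (2\log(2/\delta+1))$, and substituting $\delta\ge\sqrt{2\varepsilon}$ gives the stated bound.

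You instead run the contrapositive through Dirichlet's simultaneous approximation: pigeonholing the $|A|$-tuples $(\{ma_1/M\},\dots,\{ma_{|A|}/M\})$ into $N^{|A|}$ cubes produces a nonzero $m\in\{1,\dots,N^{|A|}\}$ with every phase $e^{-2\pi ima/M}$ within angle $2\pi/N$ of $1$, forcing $\operatorname{spg}(G)\le 1-\cos(2\pi/N)$. The two arguments are cousins---both say that $M$ well-separated points cannot fit in a space of real dimension $2|A|$---but yours is the discrete (cube-packing) version while the paper's is the continuous (ball-packing) version. Your argument is arguably more elementary and more constructive: it actually names a frequency $m$ at which $\widehat{\mathbf 1_A}$ is large. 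The paper's argument, by contrast, makes explicit the frame-theoretic content (spectral gap $=$ $1$ minus coherence of the Fourier submatrix), which resonates with the surrounding discussion. It is pleasant that both routes land on exactly the same constant $2+\log(1/\varepsilon)$; in your case this hinges, as you note, on the quadratic decay $1-\cos(2\pi/N)\le 2\pi^2/N^2$, which lets you take $N$ of order $1/\sqrt\varepsilon$.
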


\begin{proof}
Define $V$ to be the $|A|\times M$ matrix built from taking the rows of $F$ indexed by $A$ and then scaling the columns to have unit norm.
Then the inner product between any two columns of $V$ is given by
\begin{align*}
\langle v_m,v_{m'}\rangle
&=\sum_{a\in A}\bigg(\frac{1}{\sqrt{|A|}}e^{2\pi ima/M}\bigg)\bigg(\frac{1}{\sqrt{|A|}}e^{-2\pi im'a/M}\bigg)\\
&=\frac{M}{|A|}\cdot\frac{1}{M}\sum_{m''\in\mathbb{Z}_M}\mathbf{1}_A(m'')e^{-2\pi i(m'-m)m''/M}
=\frac{M}{|A|}(F^*\mathbf{1}_A)(m'-m).
\end{align*}
As such, the worst-case coherence between columns of $V$ can be expressed in terms of the Fourier bias of $A$ (and the spectral gap of $G$ by Lemma~\ref{lemma.spectral gap vs fourier bias}):
\begin{equation}
\label{eq.coherence to fourier bias}
\max_{\substack{m,m'\in\mathbb{Z}_M\\m\neq m'}}|\langle v_m,v_{m'}\rangle|
=\frac{M}{|A|}\|A\|_u
=1-\operatorname{spg}(G)
\leq 1-\varepsilon.
\end{equation}
As you might expect, unit vectors can only be incoherent if there are enough dimensions relative to the number of vectors.
To establish this explicitly, view the $v_m$'s as vectors in $\mathbb{R}^{2|A|}$ by stacking the real and imaginary parts of each entry.
Letting $\delta$ denote the shortest distance between distinct $v_m$'s (this is the same in $\mathbb{C}^{|A|}$ and $\mathbb{R}^{2|A|}$), consider the open balls of radius $\delta/2$ centered at each $v_m$.
The disjoint union of these $M$ balls is contained in the ball of radius $1+\delta/2$ centered at the origin, and so a volume comparison gives
\begin{equation*}
M\cdot C\big(\tfrac{\delta}{2}\big)^{2|A|}
=\operatorname{Vol}\bigg(\bigsqcup_{m\in\mathbb{Z}_M}B\big(v_m,\tfrac{\delta}{2}\big)\bigg)
\leq\operatorname{Vol}\bigg(B\big(0,1+\tfrac{\delta}{2}\big)\bigg)
=C\big(1+\tfrac{\delta}{2}\big)^{2|A|}.
\end{equation*}
Rearranging this inequality then yields
\begin{equation}
\label{eq.bound on size of a}
|A|
\geq\frac{\log M}{2\log(2/\delta+1)}.
\end{equation}
To analyze $\delta$, note that for any $m,m'\in\mathbb{Z}_M$, we have
\begin{equation*}
\|v_m-v_{m'}\|^2
=\|v_m\|^2-2\operatorname{Re}\langle v_m,v_{m'}\rangle+\|v_{m'}\|^2
\geq2-2|\langle v_m,v_{m'}\rangle|
\geq2\varepsilon,
\end{equation*}
where the last step is by \eqref{eq.coherence to fourier bias}.
Thus $\delta^2\geq 2\varepsilon$, with which we continue \eqref{eq.bound on size of a}:
\begin{equation*}
|A|
\geq\frac{\log M}{2\log(\sqrt{2/\varepsilon}+1)}
\geq\frac{\log M}{2+\log(1/\varepsilon)}.
\end{equation*}
Here, the last inequality can be verified using the fact that $\varepsilon\leq1$.
\end{proof}

\section{Numerical simulations}

In this section, we compare the polarization method of this paper to a state-of-the-art phase retrieval algorithm known as \textit{PhaseLift}~\cite{CandesESV:11,CandesSV:11}.
The main idea behind PhaseLift is that the intensity measurement $x\mapsto|\langle x,\varphi\rangle|^2$ can be viewed as a linear measurement if the signal $x$ is ``lifted'' to the outer product $xx^*$ in the real vector space of self-adjoint $M\times M$ matrices.
Indeed, the intensity measurement is a Hilbert-Schmidt inner product in this vector space: 
\begin{equation*}
|\langle x,\varphi\rangle|^2
=\varphi^*xx^*\varphi
=\operatorname{Tr}[\varphi^*xx^*\varphi]
=\operatorname{Tr}[\varphi\varphi^*xx^*]
=\langle xx^*,\varphi\varphi^*\rangle_{\mathrm{HS}}.
\end{equation*}
However, this larger vector space has $M^2$ dimensions, and so $M^2$ inner products are necessary to identify any member of the space---that is, unless more information is available.
In this case, we know that the desired self-adjoint matrix $xx^*$ is positive semidefinite with rank $1$, and so one could seek to minimize rank over the positive semidefinite matrices with the given intensity measurements.
Rank minimization tends to be a difficult program to solve, so one is inclined to relax it: 
\begin{equation*}
\mbox{(PhaseLift)}
\qquad\qquad
\mbox{minimize}
\quad
\operatorname{Tr}(X)
\quad
\mbox{s.t.}
\quad
\mathcal{A}(X)=b,~X\succeq0.
\end{equation*}

To date, PhaseLift is known to stably reconstruct all signals in $\mathbb{C}^M$ with only $\mathcal{O}(M)$ complex Gaussian measurements~\cite{CandesL:12}.
However, very little is known about its performance when the measurement vectors do not come from a unitarily invariant distribution (e.g., in the Fourier masks setting).
To be fair, the present paper also does not prove stable reconstruction from Fourier masks; indeed, the previous section ``merely'' established injectivity with efficient recoverability.
Therefore, for the sake of completeness, this section will provide numerical simulations that compare the stability of both phase retrieval algorithms.

First, we describe how we implement phase retrieval with polarization.
Starting with an ensemble of vertex measurement vectors $\Phi_V$ and edge measurement vectors $\Phi_E$, then for each edge $(i,j)\in E$, we apply \eqref{eq.polarization trick} to get
\begin{equation*}
w_{ij}
=\frac{1}{3}\sum_{r=0}^2\omega^r\big(|\langle x,\varphi_i+\omega^r\varphi_j\rangle|^2+\varepsilon_{ijr}\big),
\end{equation*}
where $\varepsilon_{ijr}$ denotes the noise added to the $(i,j,r)$th edge measurement.
Later, we will normalize $w_{ij}$ in order to estimate the relative phase between the measurements at vertices $i$ and $j$, but if $w_{ij}$ is small, this normalization will be particularly susceptible to noise.
As such, we first remove vertices which are adjacent to small edge weights so as to promote reliability in the edges.

\begin{algorithm}[h]
\caption{Pruning for reliability}
\SetAlgoLined
\KwIn{Weighted graph $G=(V,E,W)$, pruning parameter $\alpha$}
\KwOut{Subgraph $G'$ with a larger smallest edge weight}
Initialize $G'\leftarrow G$\\
\For{$i=1$ \KwTo $\lfloor(1-\alpha)|V|\rfloor$}{
Find edge $(i,j)\in E$ of minimal weight\\
$G'\leftarrow G'\setminus\{i,j\}$
}
\end{algorithm}

Now that we have isolated a subgraph with reliable edges, we want to find a further subgraph which is sufficiently connected so that its vertices can democratically agree on the relative phases.
To find this subgraph, we iteratively remove vertices implicated by \textit{spectral clustering}~\cite{Alon:86,AlonM:85,Cheeger:70} until the spectral gap is sufficiently large.

\begin{algorithm}[h]
\caption{Pruning for connectivity}
\SetAlgoLined
\KwIn{Graph $G'=(V',E')$, pruning parameter $\tau$}
\KwOut{Subgraph $G''$ with $\operatorname{spg}(G'')\geq\tau$}
Initialize $G''\leftarrow G'$\\
\While{$\operatorname{spg}(G'')<\tau$}{
Take $D$ to be the diagonal matrix of vertex degrees\\
Compute the Laplacian $L\leftarrow I-D^{-1/2}AD^{-1/2}$\\
Compute the eigenvector $u$ corresponding to the second eigenvalue of $L$\\
\For{$i=1$ \KwTo $|V''|$}{
Let $S_i$ denote the vertices corresponding to the $i$ smallest entries of $D^{-1/2}u$\\
Let $E(S_i,S_i^\mathrm{c})$ denote the number of edges between $S_i$ and $S_i^\mathrm{c}$\\
$h_i\leftarrow E(S_i,S_i^\mathrm{c})/\min\{\sum_{v\in S_i}\operatorname{deg}(v),\sum_{v\in S_i^\mathrm{c}}\operatorname{deg}(v)\}$
}
Take $S$ to be the $S_i$ of minimal $h_i$\\$G''\leftarrow G''\setminus S$
}
\end{algorithm}

At this point, our graph has reliable edges and is well connected.
We now run \textit{angular synchronization}~\cite{BandeiraSS:12,Singer:11} to reach a consensus on the phases of the vertex measurements, up to a global phase factor.

\begin{algorithm}[h]
\caption{Angular synchronization}
\SetAlgoLined
\KwIn{Graph $G''=(V'',E'',W'')$}
\KwOut{Vector of phases corresponding to vertex measurements}
Let $A_1$ denote the weighted adjacency matrix with the weights normalized\\
Let $D$ denote the diagonal matrix of vertex degrees\\
Compute the connection Laplacian $L_1\leftarrow I-D^{-1/2}A_1D^{-1/2}$\\
Compute the eigenvector $u$ corresponding to the smallest eigenvalue of $L_1$\\
Output the phases of the coordinates of $u$
\end{algorithm}

Having estimated the phases of the vertex measurements corresponding to $V''$, we now multiply the square roots of the vertex measurements $\{\sqrt{|\langle x,\varphi_i\rangle|^2+\varepsilon_{i}}\}_{i\in V''}$ by these phases to estimate the inner products $\{\langle x,\varphi_i\rangle\}_{i\in V''}$, and then produce a least-squares estimate for the desired signal $x$.
As established in~\cite{AlexeevBFM:12}, this implementation of the polarization method is stable to noise when the vertex measurement vectors are complex Gaussian.
In this section, we run numerical simulations to illustrate stability in the Fourier masks setting.

In the experiments that follow, we run the above implementation of polarization using pruning parameters $\alpha=0.99$, $\tau=0.1$, and the following construction of masks:
First, we draw $K=3$ masks whose diagonal entries are independent with distribution $\mathcal{N}(0,1)$.
Next, we draw $M-1$ independent realizations of a random variable $X$ with uniform distribution over the interval $[0,1]$, and we put $i\in B$ whenever $X_i\leq\frac{\log M}{M}$.
This way, $B\subseteq\mathbb{Z}_M$ is a subset that does not contain $0$, but the nonzero members of $\mathbb{Z}_M$ each reside in $B$ with probability $\frac{\log M}{M}$, thereby following the intent of Theorem~\ref{theorem.main result}.
Since $0\not\in B$, we take $A=B\cup(-B)$, and then we construct the auxiliary masks $\{D_k+\omega^rE^aD_{k'}\}_{r=0}^2$ for every $(k,k')\in\{0,1,2\}$, $k\geq k'$ and $a\in A$.
Overall, in these experiments, polarization will use a total of $18|A|+3$ masks, where $|A|$ tends to be approximately $2\log M$.

By contrast, PhaseLift offers a lot more flexibility with the number and types of masks used, and this flexibility allows us to run a collection of choice comparisons.
We will run experiments at three noise levels, and for each level, we will compare polarization to three different implementations of PhaseLift.
In the first implementation, we will only give PhaseLift the measurements corresponding to the original $3$ masks we give to polarization.
Next, we will give PhaseLift all $18|A|+3$ of the masks we described in the previous paragraph.
For the last comparison, we will give PhaseLift the original $3$ masks along with $18|A|$ additional masks whose diagonal entries are also independent with distribution $\mathcal{N}(0,1)$. 
Based on discussions in~\cite{CandesESV:11}, this last setup appears to be the intended design of Fourier masks for PhaseLift, so in a sense, this last comparison will allow both algorithms to compete with their own ``home-field advantage.''

In the comparison between polarization and PhaseLift, we use two performance metrics:
run time and relative error of reconstruction, defined in this setting by
\begin{equation*}
\min_{\substack{c\in\mathbb{C}\\|c|=1}}\frac{\|c\hat{x}-x\|_2}{\|x\|_2}.
\end{equation*}
Here, $c$ is playing the role of the global phase factor we lost in the intensity measurement process.
For each noise level tested $\sigma^2\in\{0,0.1,1\}$, and for each type of mask ensemble given to PhaseLift (original $3$, same $18|A|+3$, random $18|A|+3$), we considered multiple signal dimensions $M\in\{2^5,2^6,2^7,2^8,2^9\}$.
In each of these scenarios, we ran $30$ realizations of the following experiment:
\begin{itemize}
\item Draw each entry of the signal $x$ independently from $\mathcal{N}(0,1)$.
\item Add independent $\mathcal{N}(0,\sigma^2)$ noise to each intensity measurement $|\langle x,\varphi\rangle|^2$.
\item Run both polarization and PhaseLift to estimate $x$ and record the run time and relative error.
\end{itemize}
We ran these experiments on an Intel\textregistered\ Core\texttrademark 2 Quad CPU Q9550 at 2.83GHz with 3.6 GB of memory.

\begin{figure}[p!]
\centering
\includegraphics[width=0.45\textwidth]{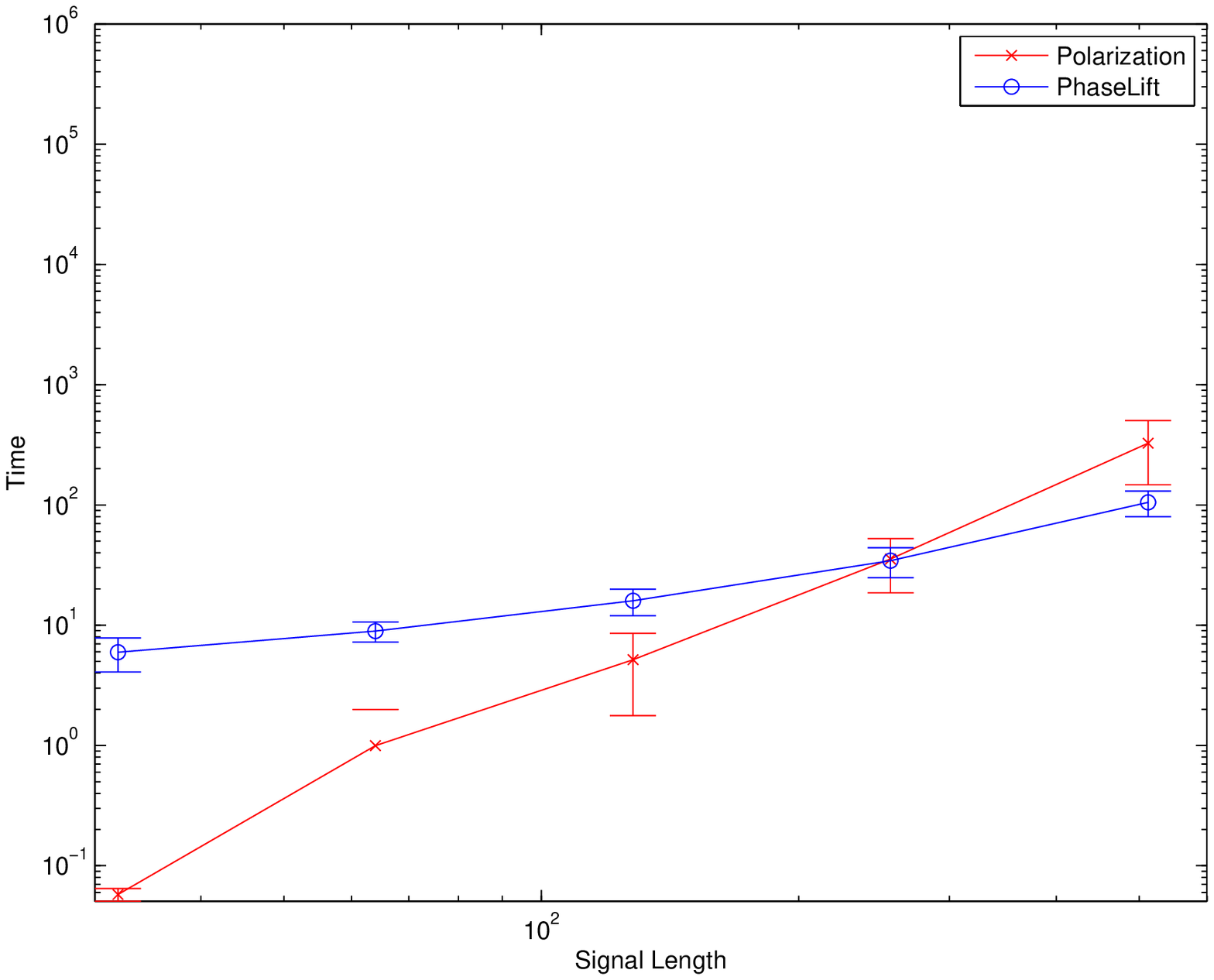}
\includegraphics[width=0.45\textwidth]{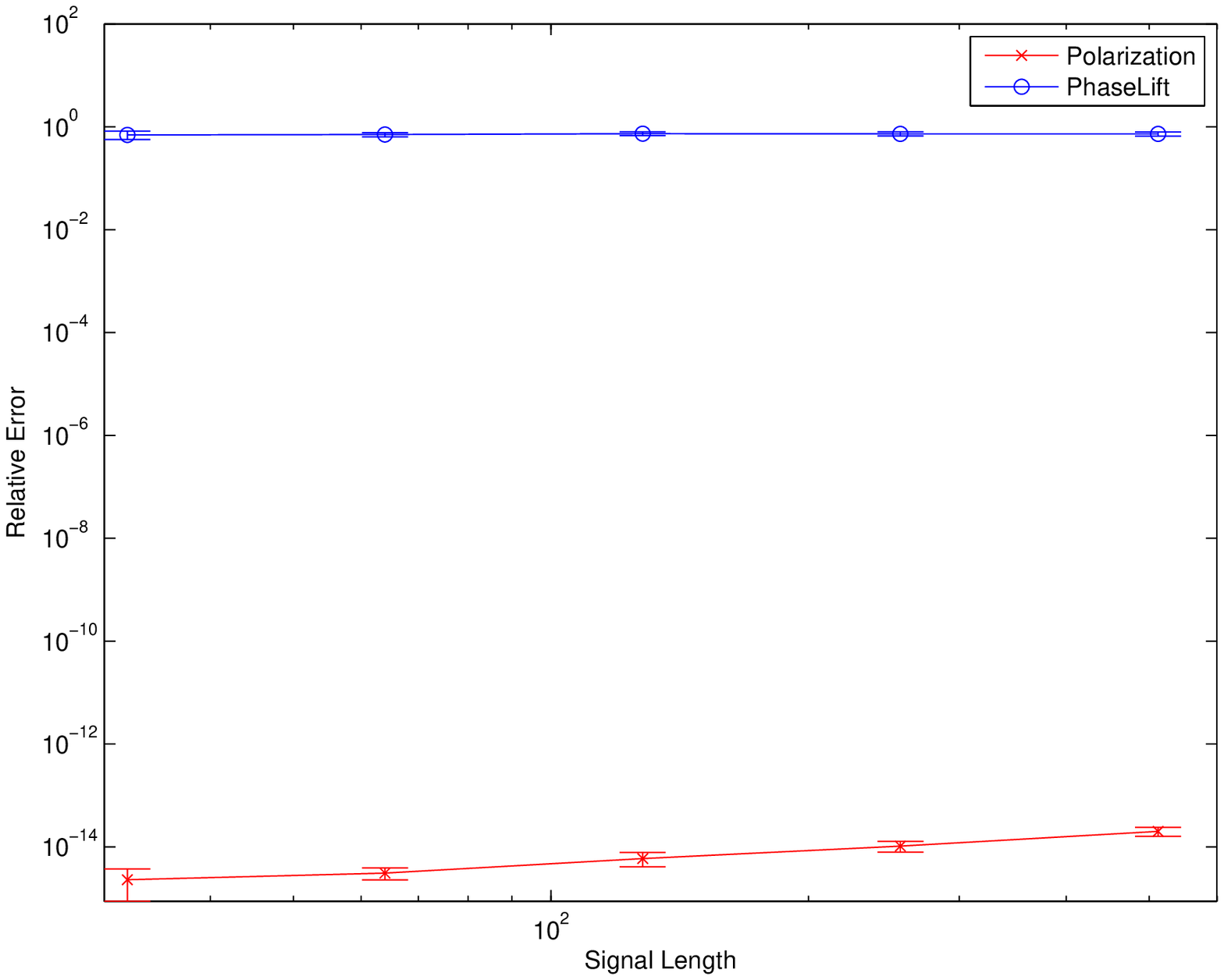}
\includegraphics[width=0.45\textwidth]{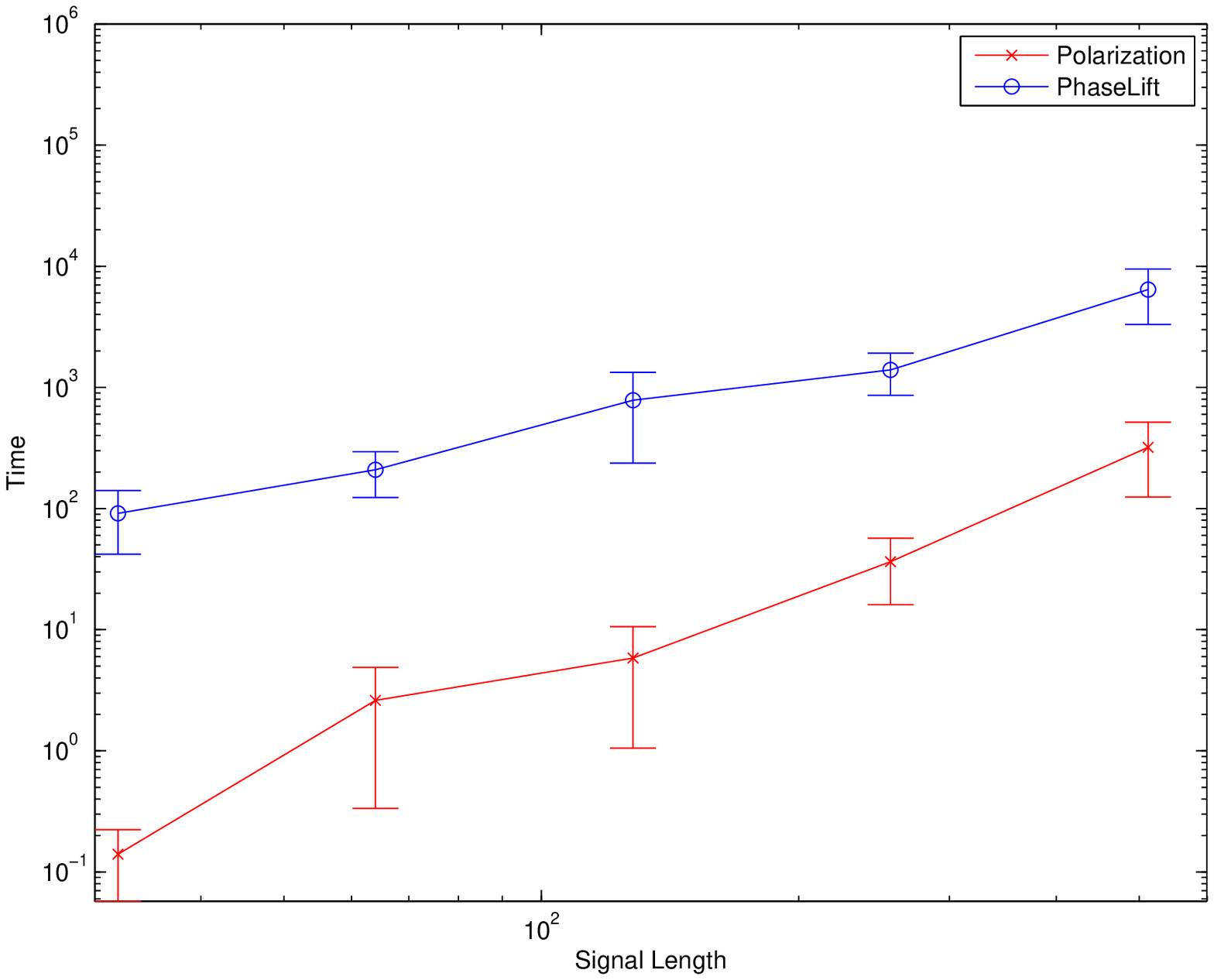}
\includegraphics[width=0.45\textwidth]{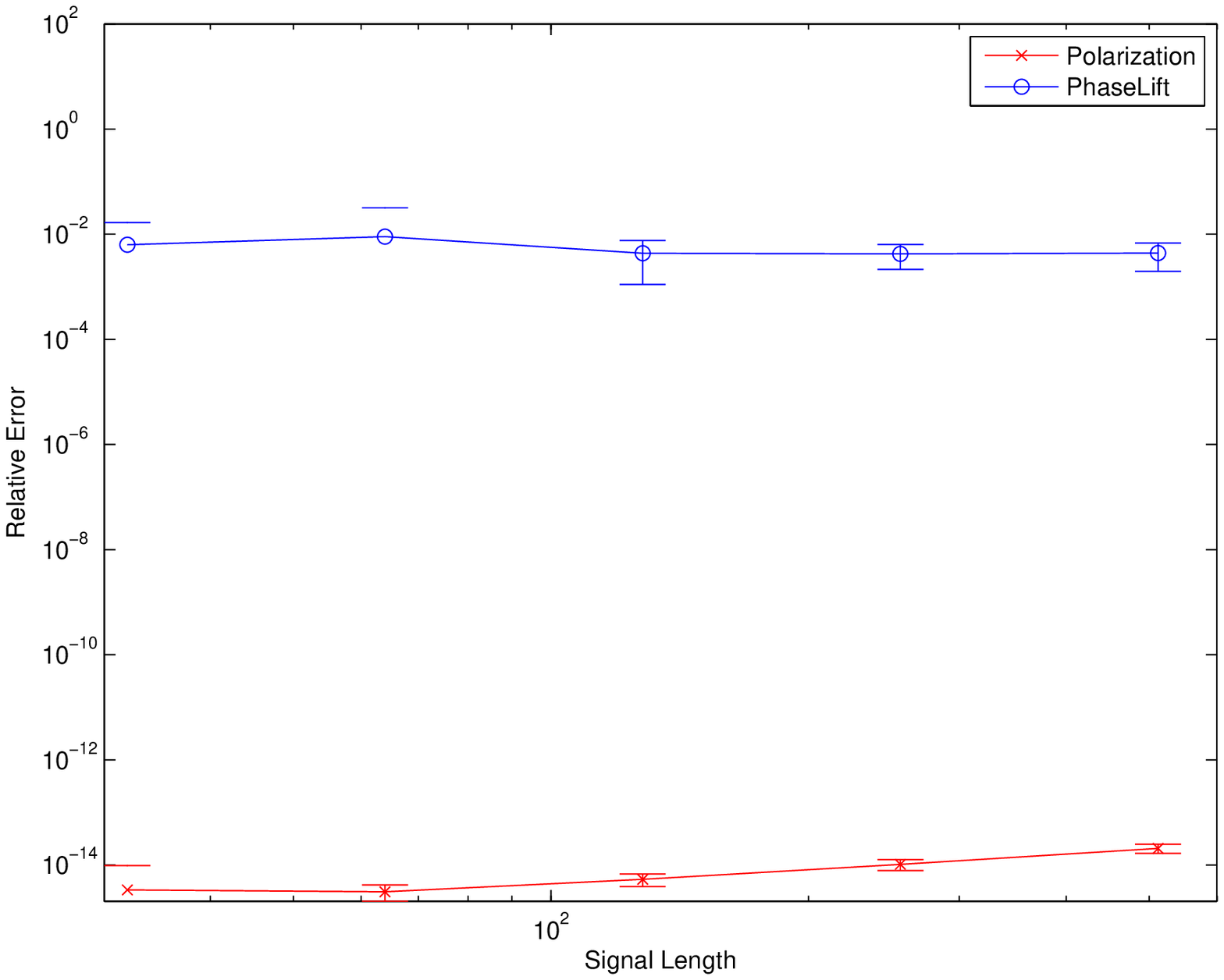}
\includegraphics[width=0.45\textwidth]{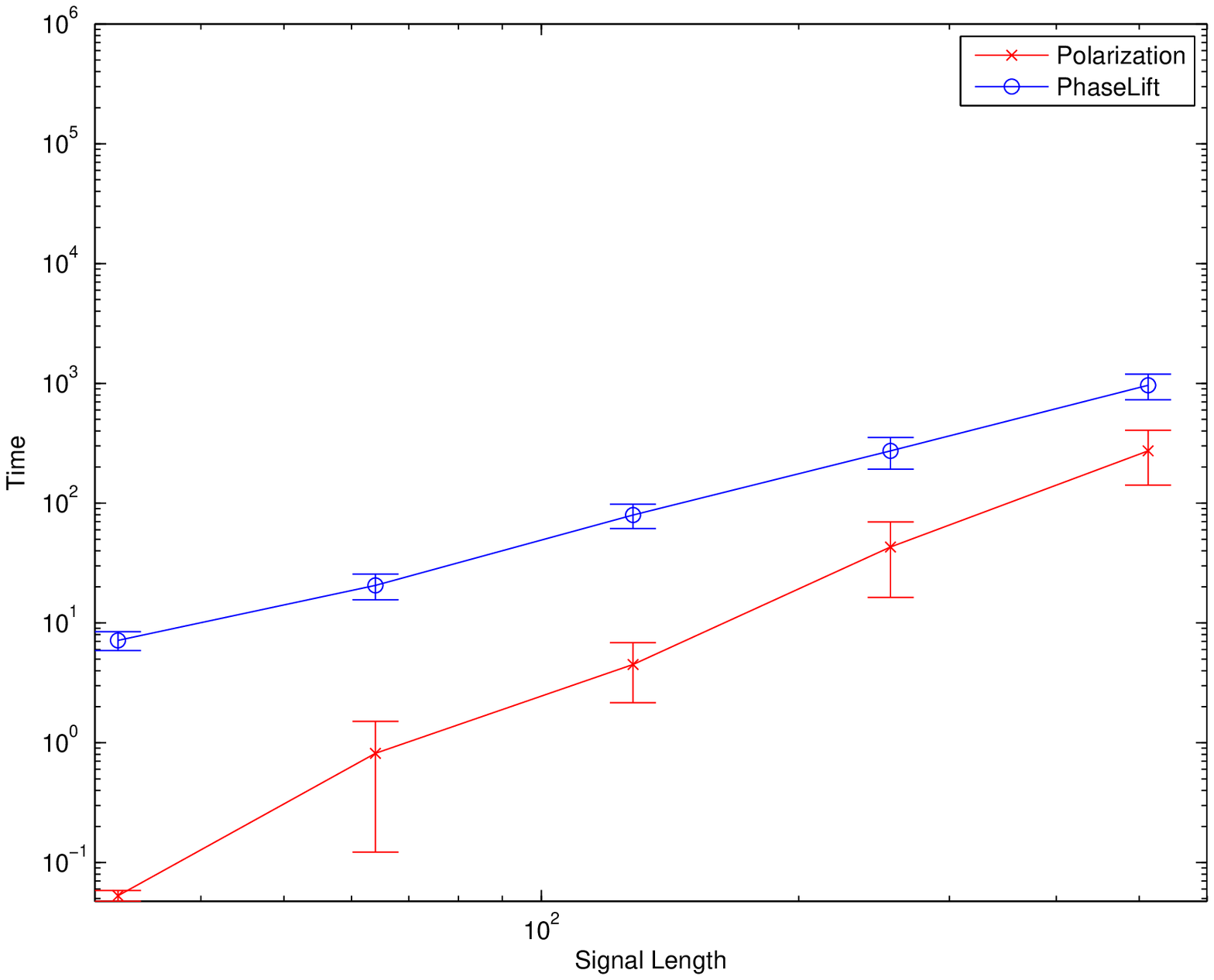}
\includegraphics[width=0.45\textwidth]{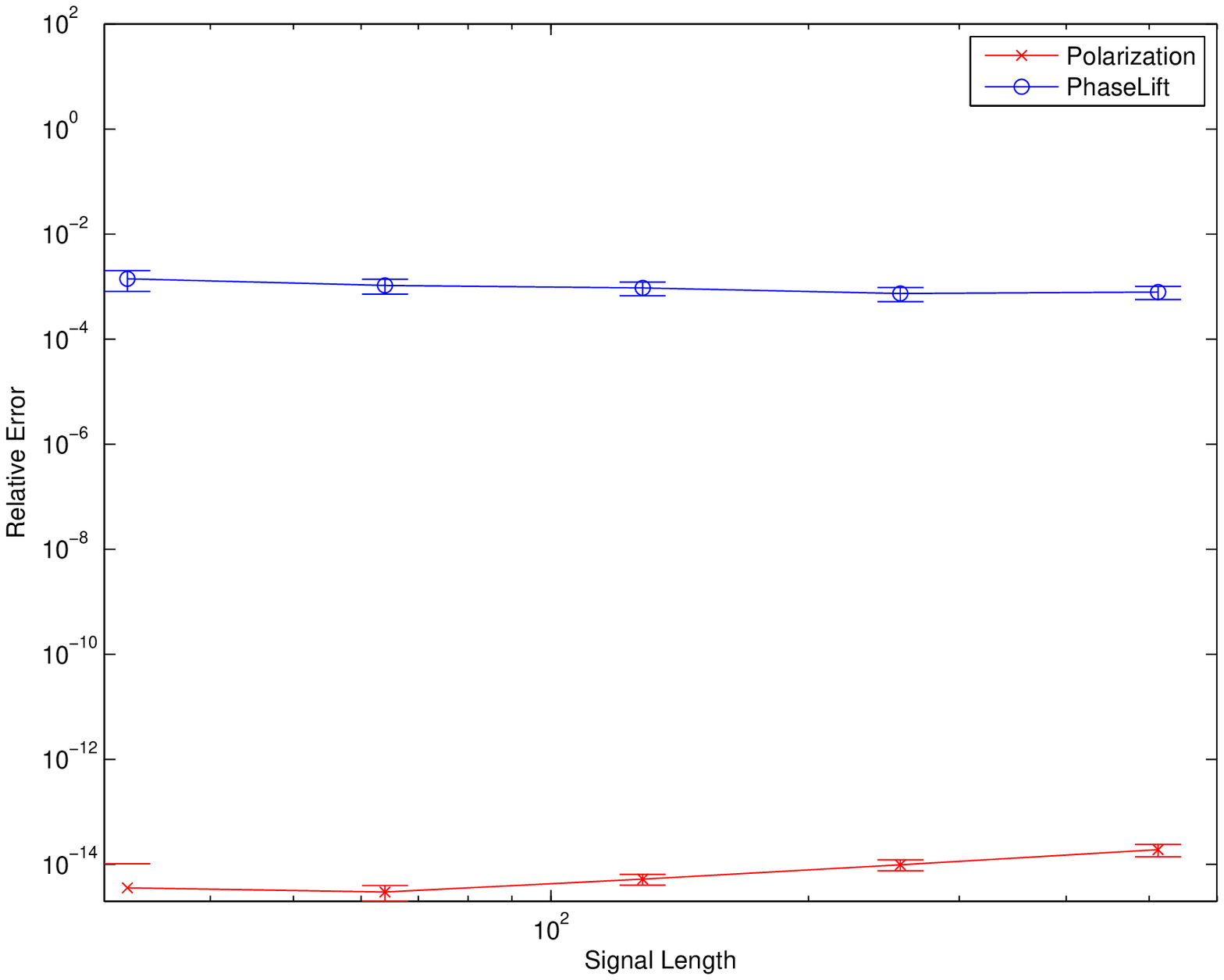}
\caption{
\label{figure.noise00}
Phase retrieval without noise.
For the first row of graphs, PhaseLift is given the original $K=3$ random masks, for the second, it is given all $18|A|+3$ of the masks given to polarization, and in the last row, PhaseLift is given $18|A|+3$ random masks.
The plotted data illustrate the sample mean plus/minus one sample standard deviation.
In some cases, the lower error bar is negative, and so it is not plotted in the log scale.
}
\end{figure}

\begin{figure}[p!]
\centering
\includegraphics[width=0.45\textwidth]{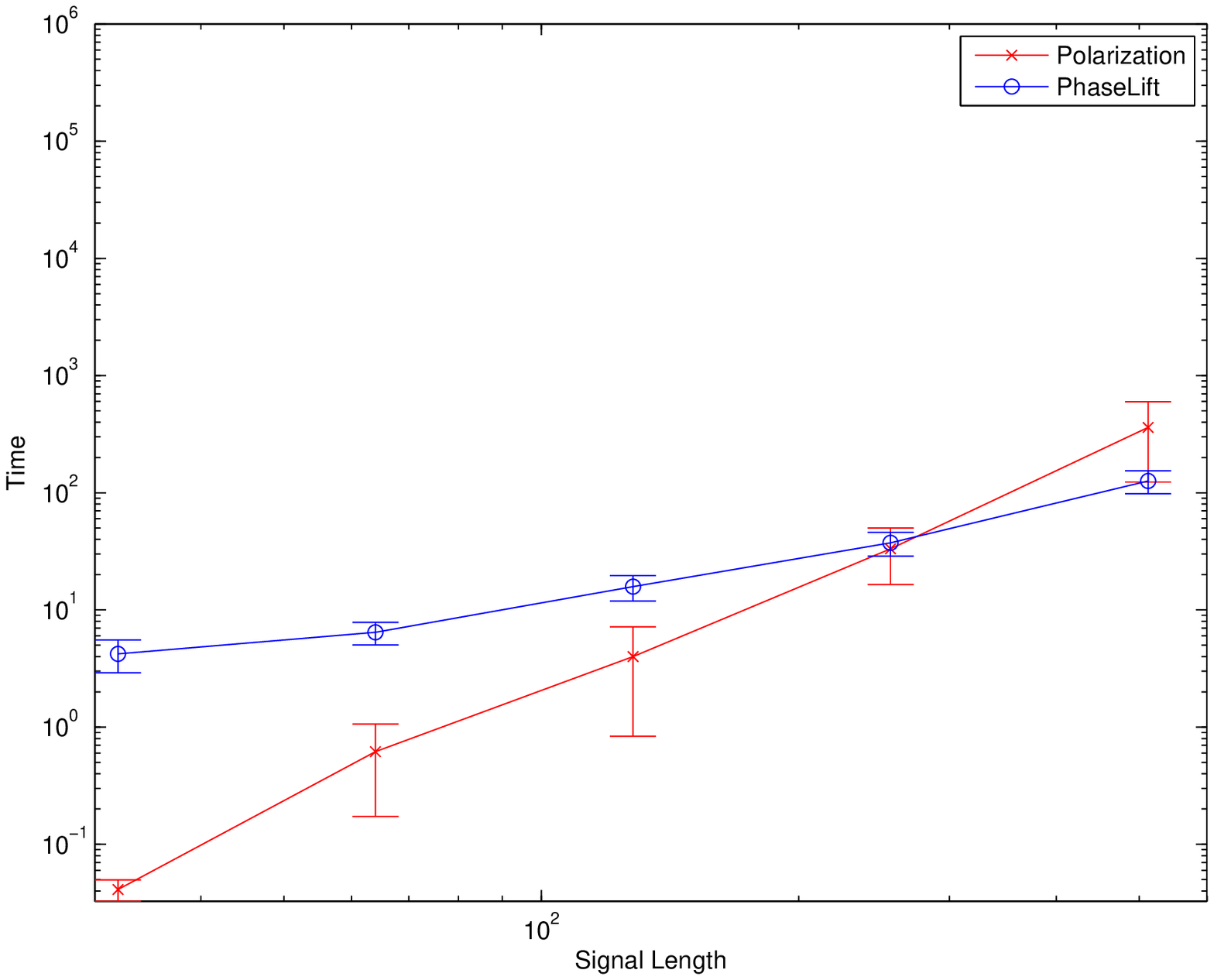}
\includegraphics[width=0.45\textwidth]{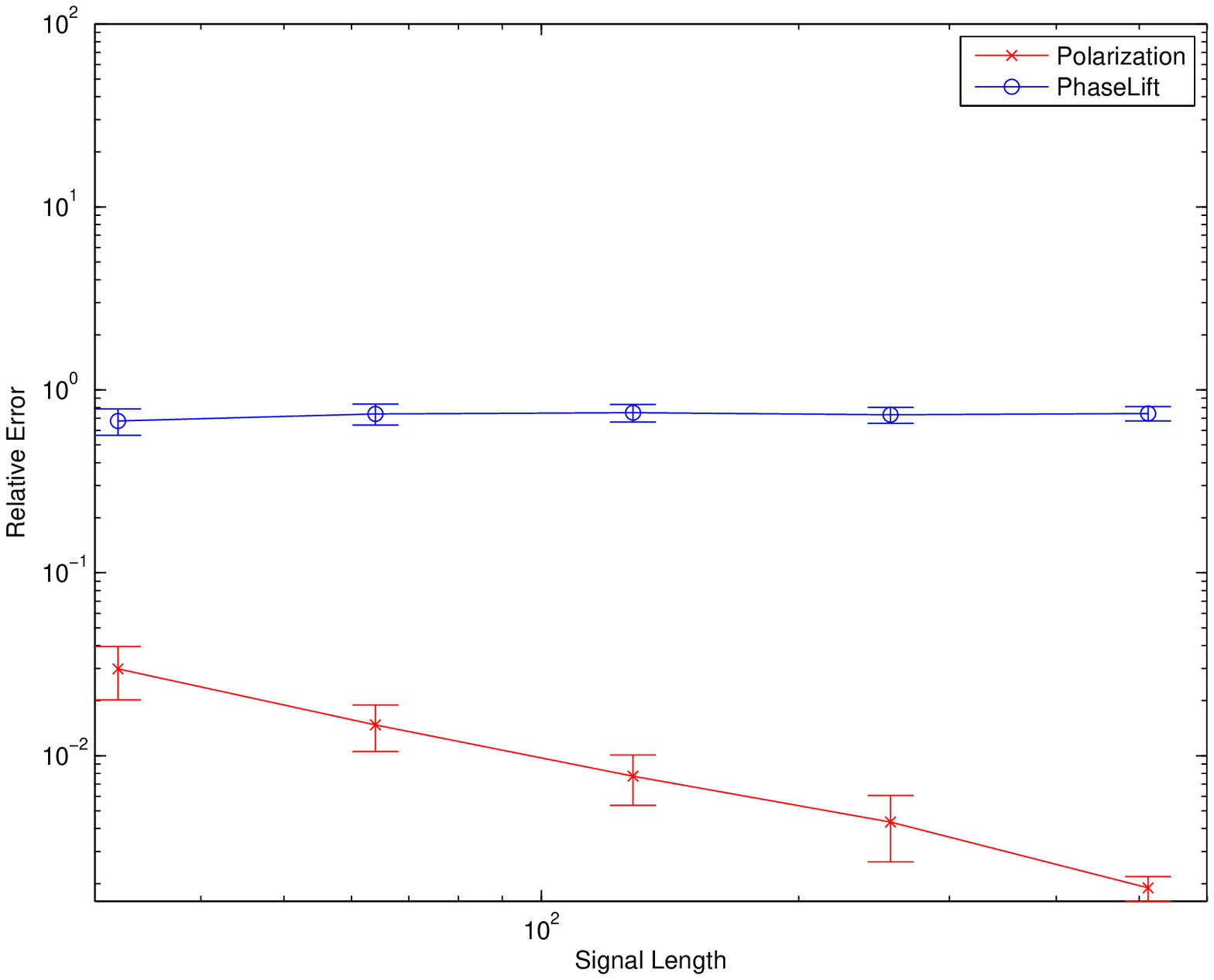}
\includegraphics[width=0.45\textwidth]{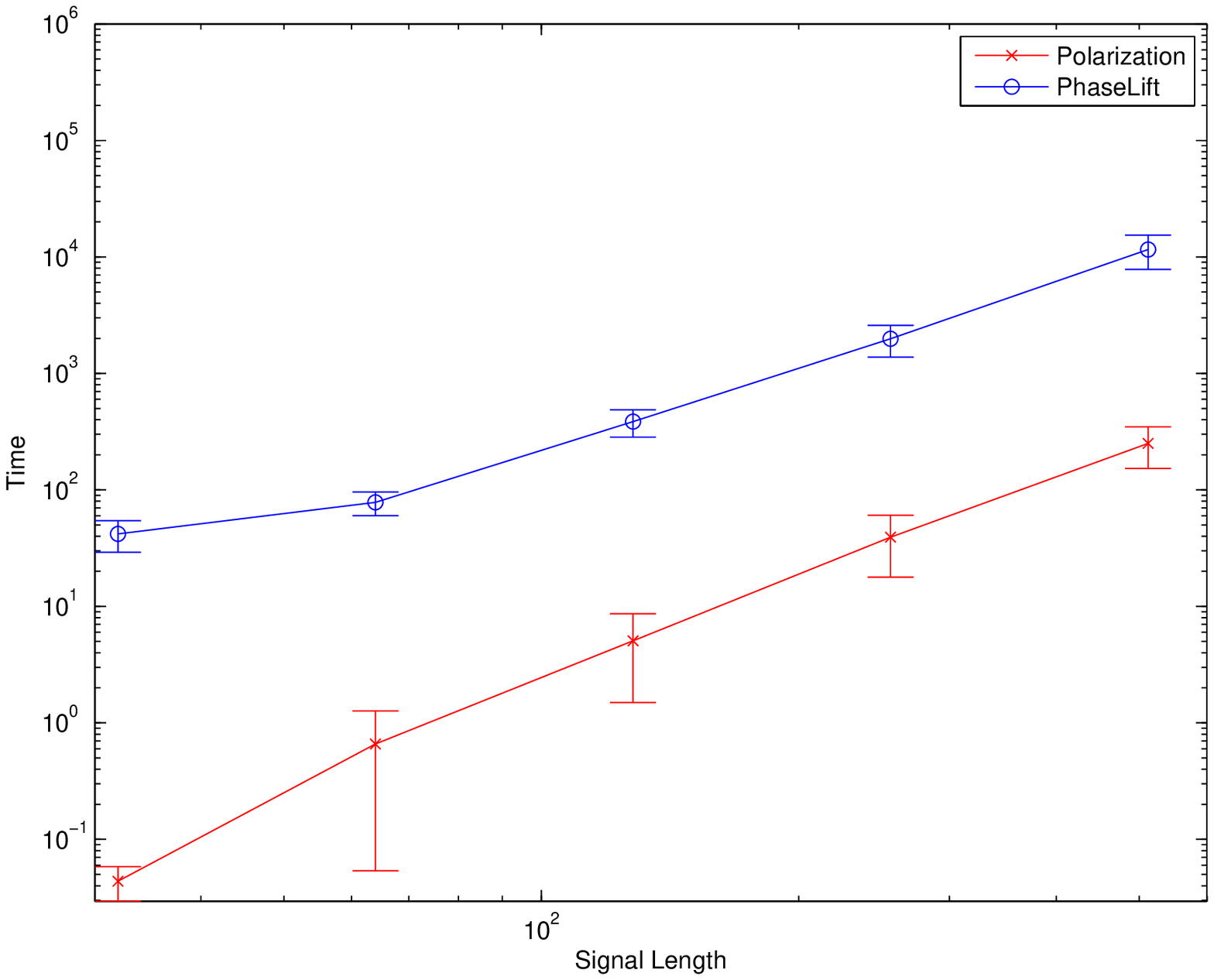}
\includegraphics[width=0.45\textwidth]{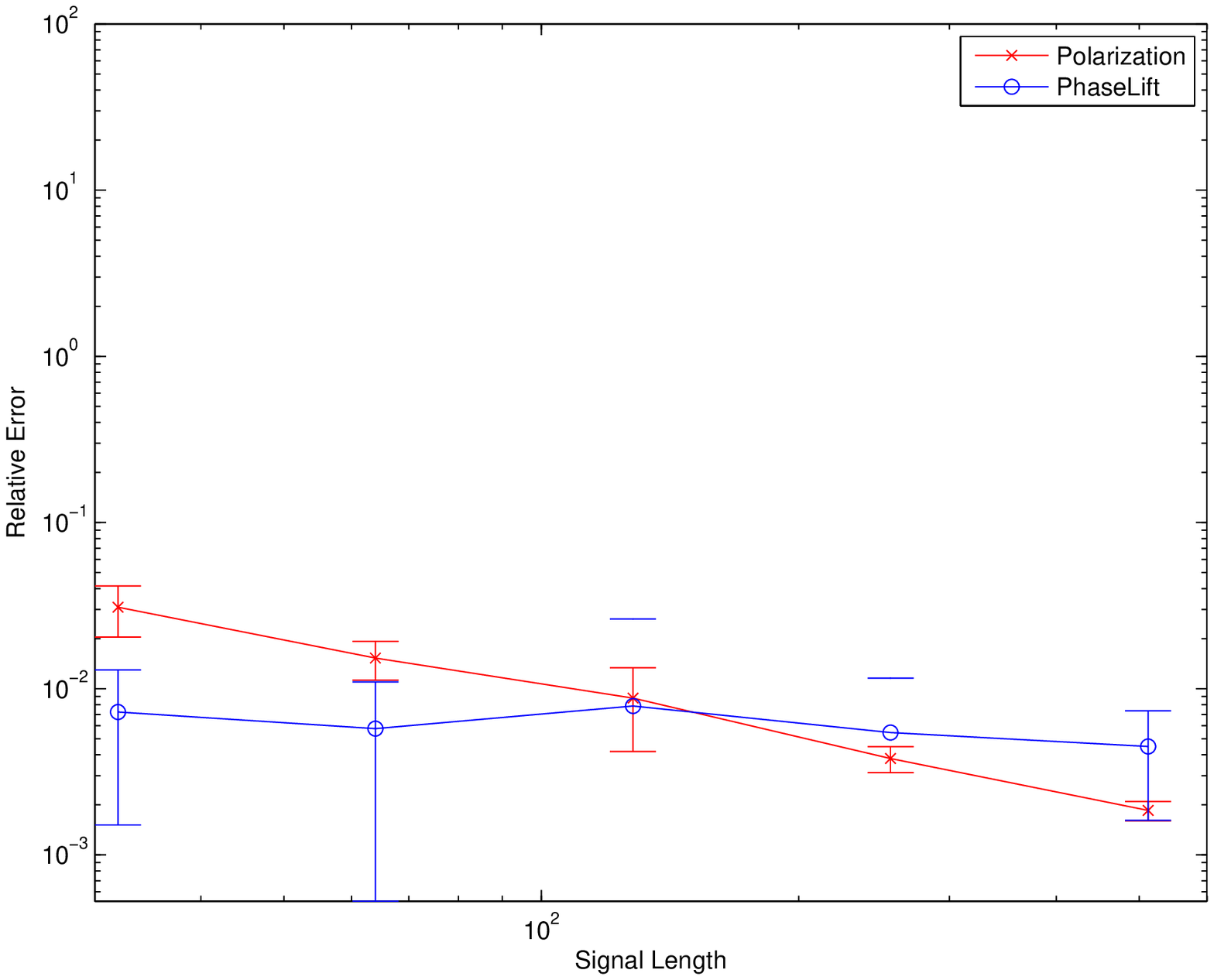}
\includegraphics[width=0.45\textwidth]{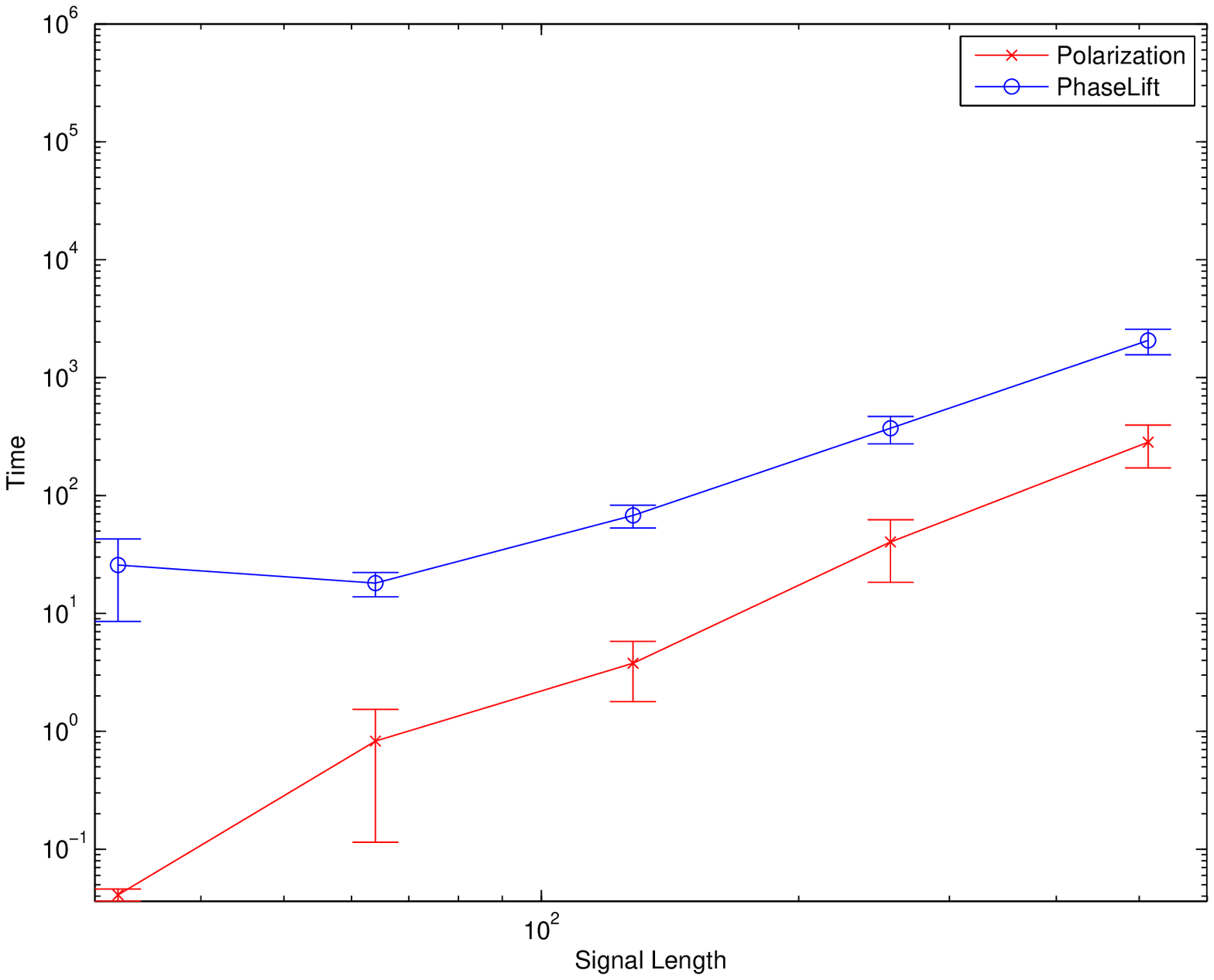}
\includegraphics[width=0.45\textwidth]{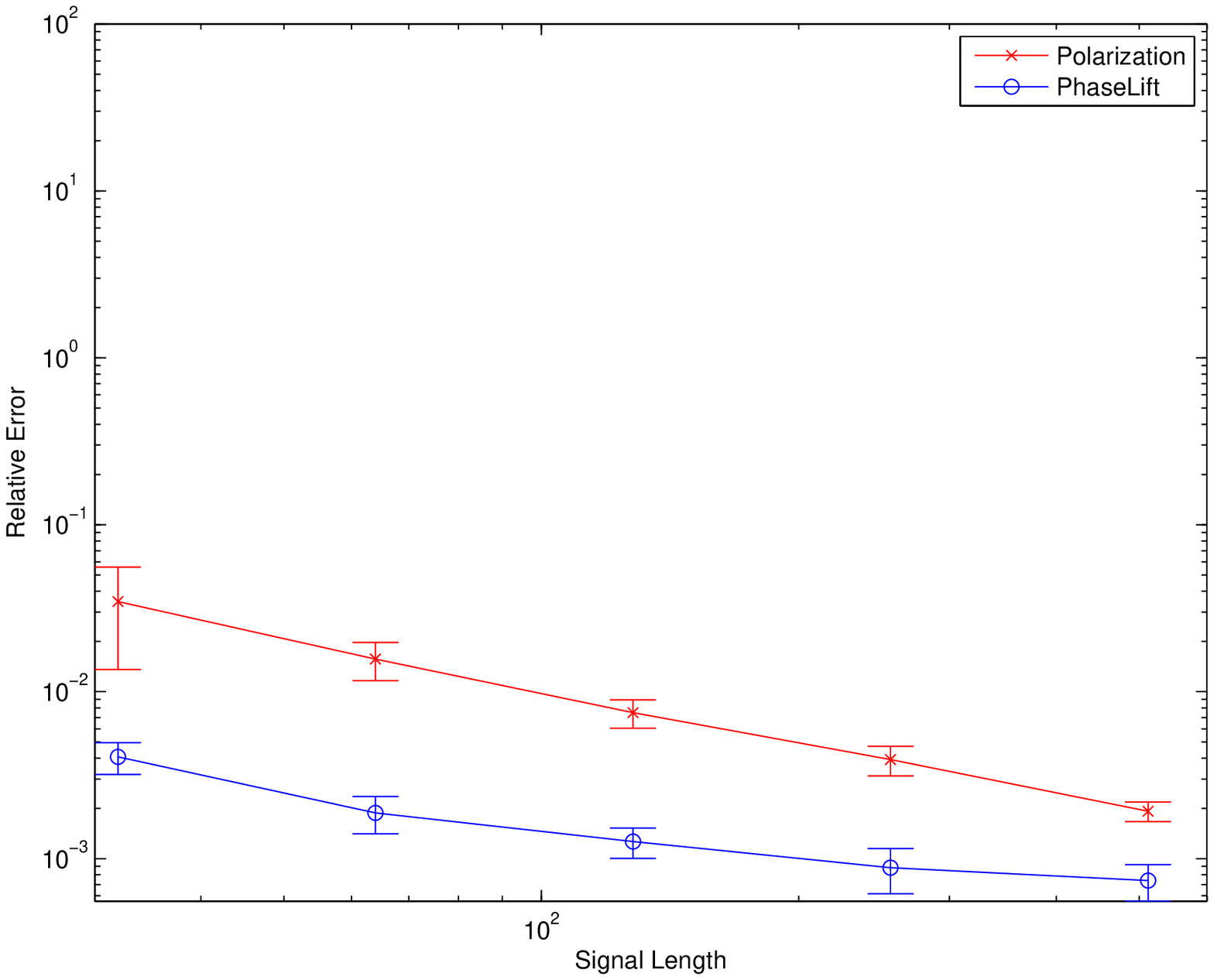}
\caption{
\label{figure.noise01}
Phase retrieval with $\mathcal{N}(0,0.1)$ noise added to each intensity measurement.
For the first row of graphs, PhaseLift is given the original $K=3$ random masks, for the second, it is given all $18|A|+3$ of the masks given to polarization, and in the last row, PhaseLift is given $18|A|+3$ random masks.
The plotted data illustrate the sample mean plus/minus one sample standard deviation.
In some cases, the lower error bar is negative, and so it is not plotted in the log scale.
}
\end{figure}

\begin{figure}[p!]
\centering
\includegraphics[width=0.45\textwidth]{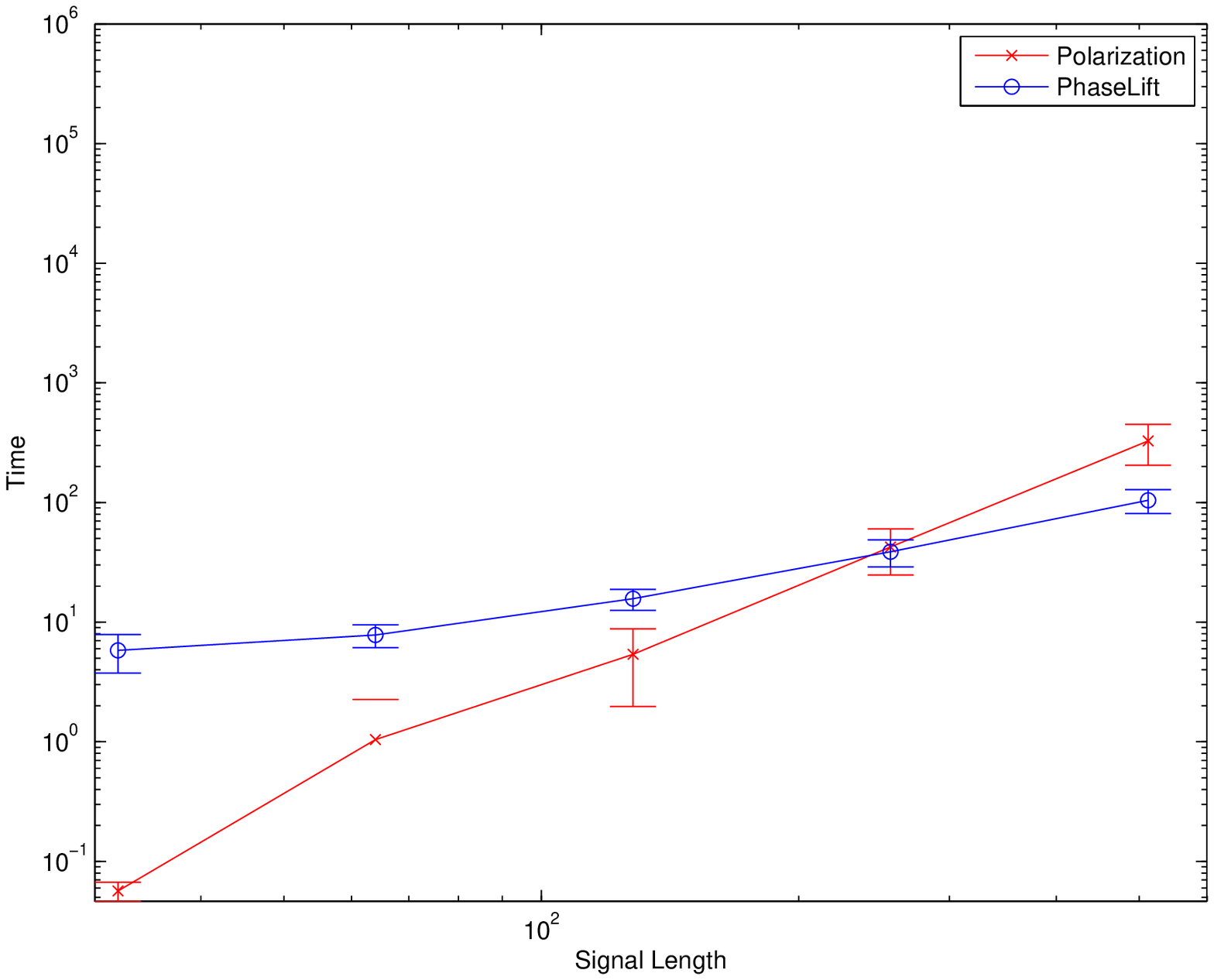}
\includegraphics[width=0.45\textwidth]{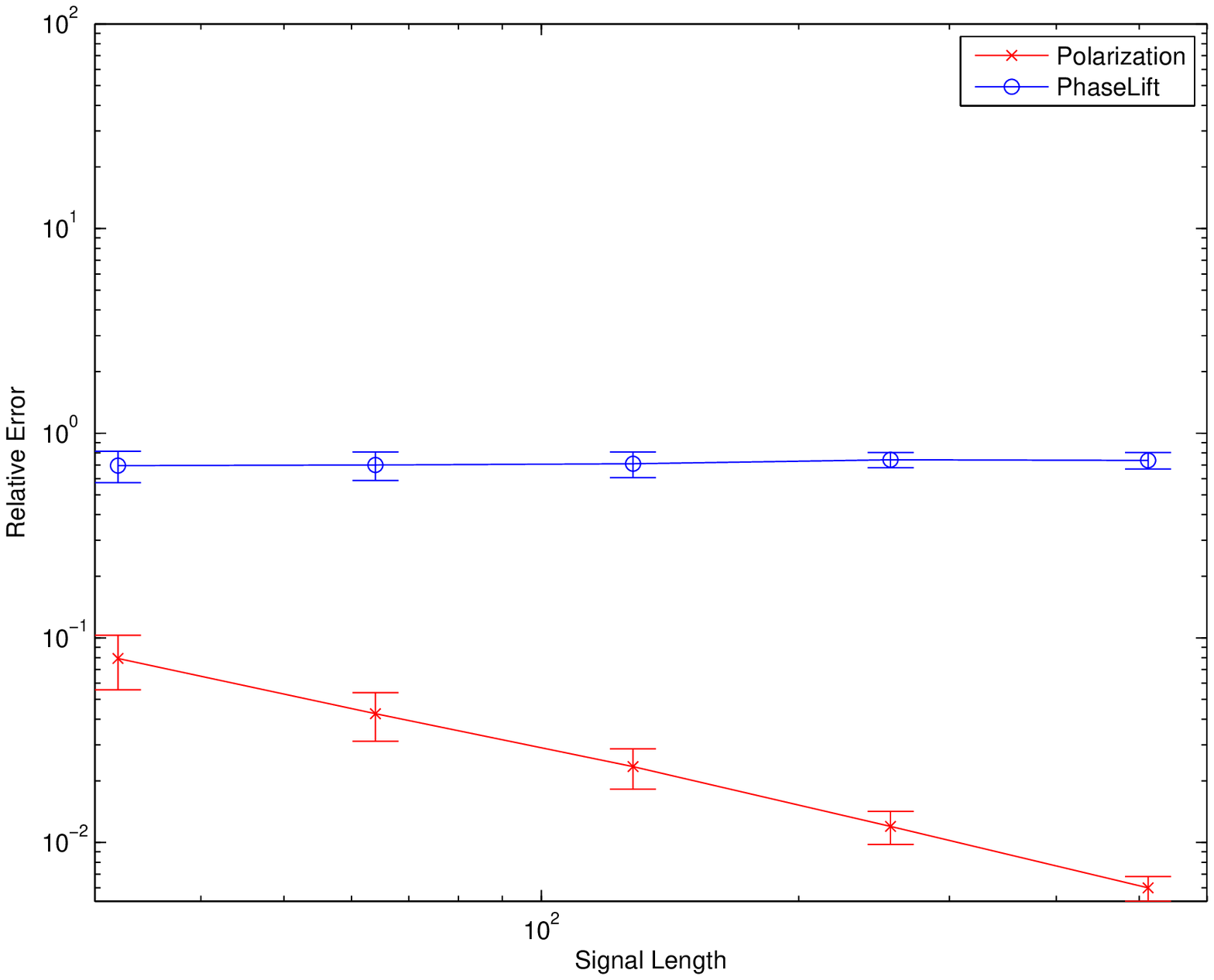}
\includegraphics[width=0.45\textwidth]{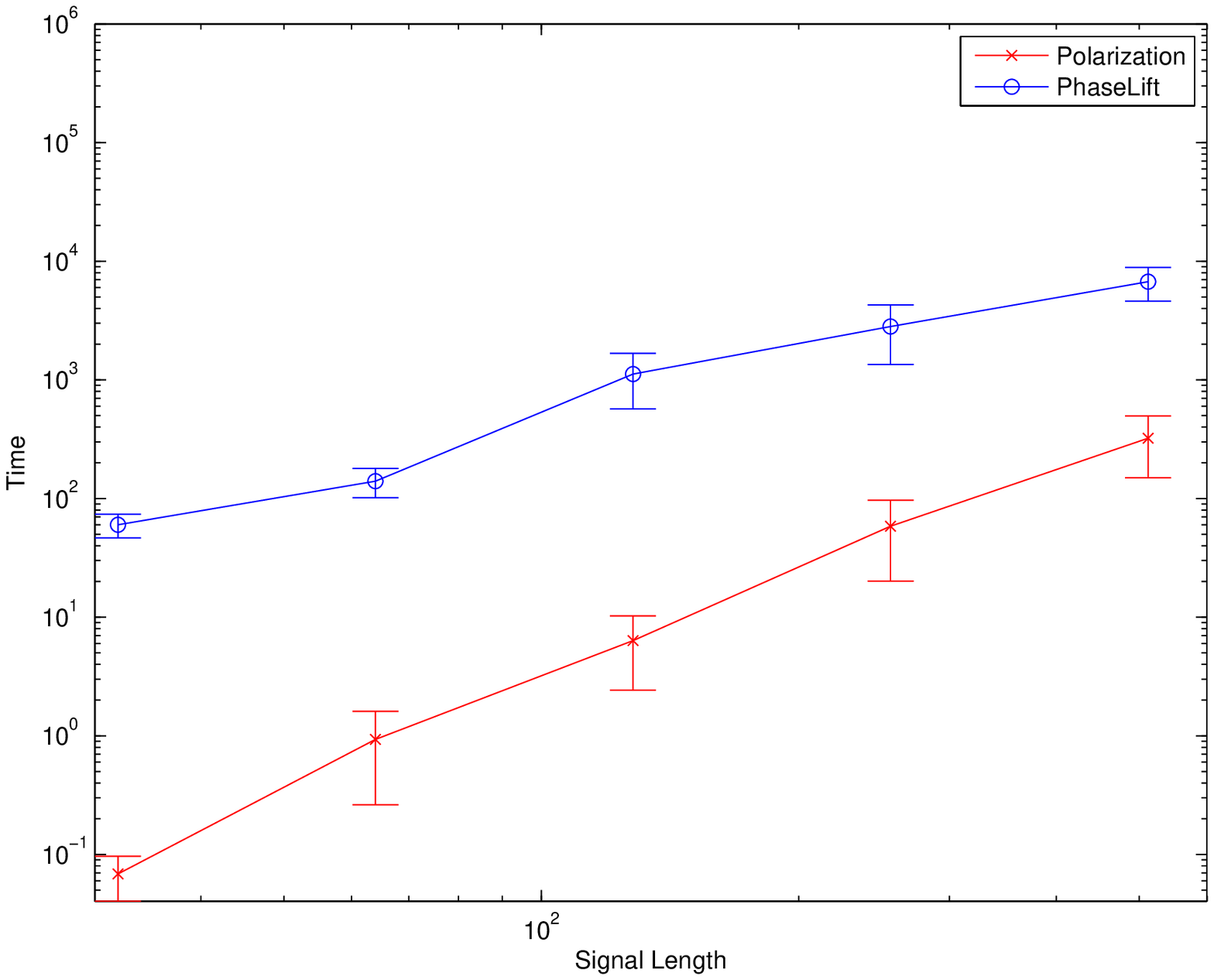}
\includegraphics[width=0.45\textwidth]{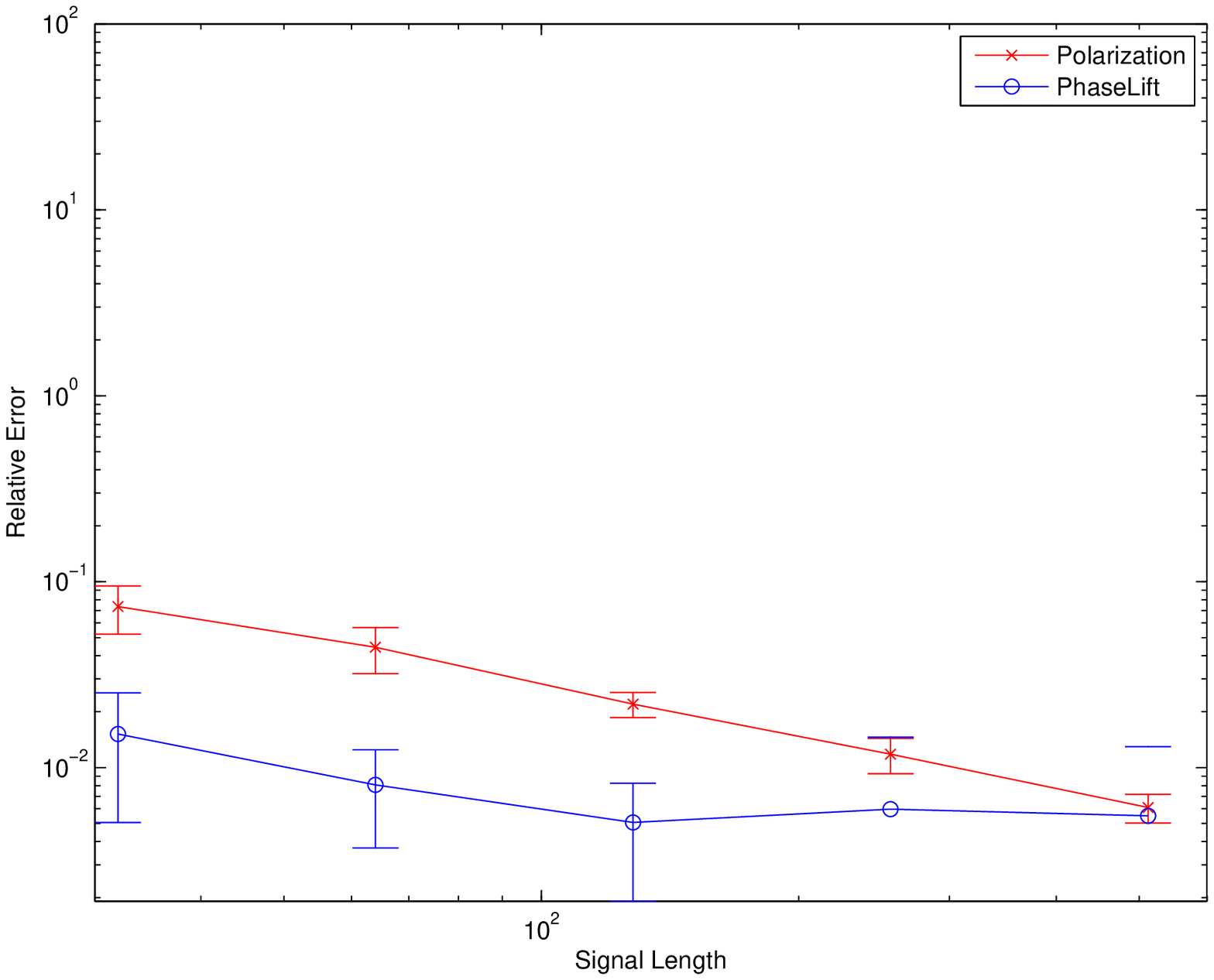}
\includegraphics[width=0.45\textwidth]{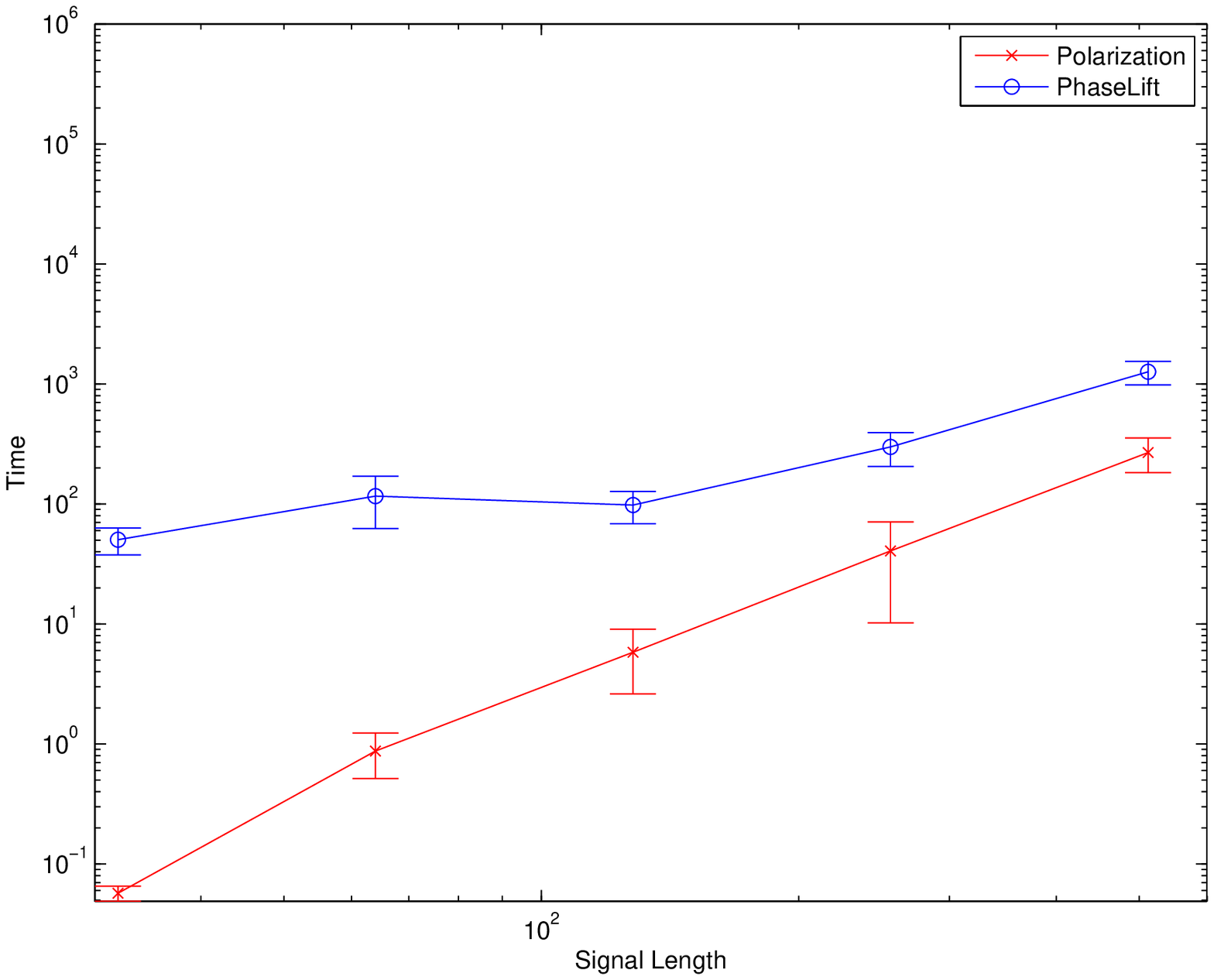}
\includegraphics[width=0.45\textwidth]{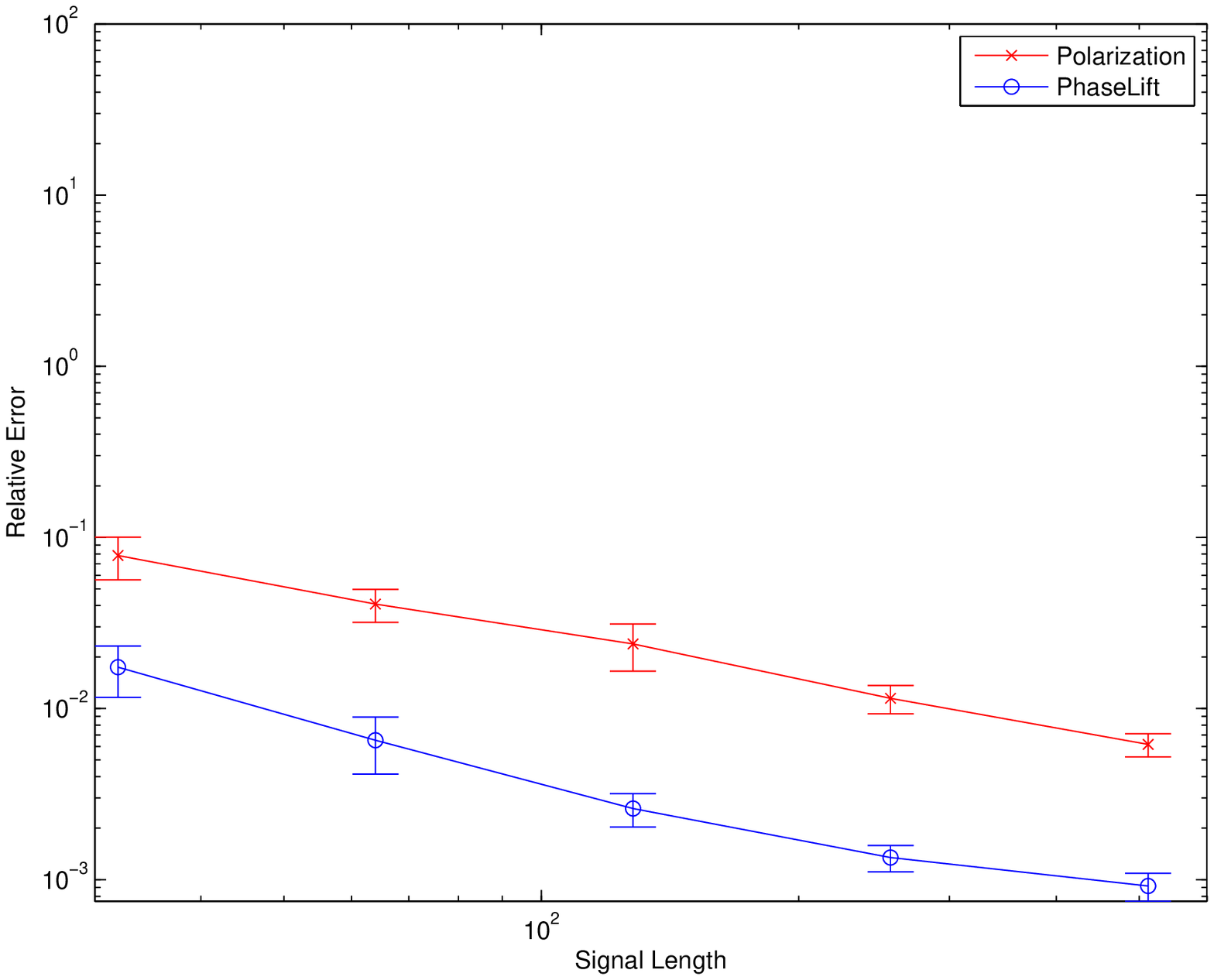}
\caption{
\label{figure.noise10}
Phase retrieval with $\mathcal{N}(0,1)$ noise added to each intensity measurement.
For the first row of graphs, PhaseLift is given the original $K=3$ random masks, for the second, it is given all $18|A|+3$ of the masks given to polarization, and in the last row, PhaseLift is given $18|A|+3$ random masks.
The plotted data illustrate the sample mean plus/minus one sample standard deviation.
In some cases, the lower error bar is negative, and so it is not plotted in the log scale.
}
\end{figure}

Figure~\ref{figure.noise00} presents the results of the noiseless scenarios ($\sigma^2=0$). Here, perhaps the most striking distinction between polarization and PhaseLift is the relative error; indeed, polarization produces an estimate with almost no error ($\sim 10^{-14}$), whereas PhaseLift produces noticeable error ($\sim 10^{-2}$). In addition, polarization beats PhaseLift when using the same number of masks. For higher dimension signals, this advantage translates to minutes versus hours of running time. Letting $N$ denote the number of intensity measurements, then PhaseLift takes $\mathcal{O}(N^{3.5})$ operations, whereas the computational bottleneck in polarization is the least-squares step, which takes $\mathcal{O}(N^3)$ operations, but this does not account for the disparity in run time.
Rather, the disparity is indicative of a large constant in front of the $N^{3.5}$.
In comparing the relative error in reconstruction, we note that PhaseLift terminates when the step size is sufficiently small, so this accounts for the significant difference in performance.
Overall, in the noiseless case, polarization handily defeats PhaseLift, even when PhaseLift is given the random masks it prefers.

Next, Figure~\ref{figure.noise01} illustrates how polarization and PhaseLift perform in the present of some noise ($\sigma^2=0.1$).
In this case, PhaseLift continues to be rather slow, and it is clear that polarization enjoys greater stability from the $18|A|$ additional masks when compared to PhaseLift's performance with just the original $3$ random masks.
In~\cite{CandesESV:11}, it is claimed that $3$ random masks are sufficient for PhaseLift to reconstruct typical signals (and this is corroborated with our experiments in the noiseless case), but Figure~\ref{figure.noise01} illustrates that PhaseLift requires more masks in order to reconstruct stably.
And indeed, when PhaseLift is given more masks, it performs much better.
Specifically, when they use the same masks, polarization and PhaseLift produce estimates with statistically comparable relative error, and PhaseLift performs slightly better when given the random masks it prefers.
Intuitively, it makes sense that PhaseLift should be at least slightly more stable than polarization since it leverages all of the measurements in a democratic fashion, whereas polarization disenfranchises the vertex and edge measurements that are too small (i.e., ``wasting'' measurements).

Figure~\ref{figure.noise10} presents the results of our high-noise scenario ($\sigma^2=1$), which are very similar to the results corresponding to $\sigma^2=0.1$.

\section{Discussion}

In this paper, we showed how to leverage the polarization-based phase retrieval technique to construct $\Theta(\log M)$ Fourier masks that uniquely determine every $M$-dimensional signal, and then we used numerical simulations to illustrate the stability of polarization with such masks.
At this point, we offer two conclusions:
\begin{itemize}
\item[(i)] The polarization technique for phase retrieval is flexible enough to provably accommodate certain measurement design criteria (such as Fourier masks).
\item[(ii)] If you have the ability to use polarization instead of PhaseLift, you will gain considerable speedups in run time at the price of a slight increase in relative error.
\end{itemize}
It remains to be rigorously proved that stability holds in the Fourier masks setting.
We did not exploit the inherent Fourier structure to further speed up reconstruction, though this could very well be possible.
The number of masks might be decreased if additional information about the signal were leveraged, as in~\cite{Fannjiang:11,JaganathanOH:12}.
Finally, we still seek Fourier masks--based performance guarantees for PhaseLift and its modifications~\cite{DemanetH:12,WaldspurgerAM:12}.

\section*{Acknowledgments}

The authors thank the Erwin Schr\"{o}dinger International Institute for Mathematical Physics for hosting a workshop on phase retrieval that helped solidify the main ideas in this paper.
A.\ S.\ Bandeira was supported by NSF DMS-0914892.
The views expressed in this article are those of the authors and do not reflect the official policy or position of the United States Air Force, Department of Defense, or the U.S.~Government.

\end{document}